\title{If a  Minkowski billiard is projective, it is the standard billiard}
\author{Alexey Glutsyuk and Vladimir S. Matveev\thanks{ https://orcid.org/0000-0002-2237-1422 }} 
\date{}
\newtheorem{theorem}{Theorem}
\newtheorem{remark}{Remark}
\newtheorem{proposition}{Proposition}
\newtheorem{example}{Example}
\newenvironment{proof}{{\noindent \textbf{Proof}\,\,}}{\hspace*{\fill}$\Box$\medskip}
\def\rr{\mathbb R}
\def\rp{\mathbb{RP}}
\def\mcn{\mathcal N}
\def\gg{\mathfrak{g}}
\def\var{\varepsilon}
\def\wt{\widetilde}
\newcommand{\weg}[1]{}
\newcommand{\dedication}[1]{%
  \begin{flushright}
    \itshape
    #1
  \end{flushright}
}
\begin{document}

\maketitle
\dedication{
To academician Anatolii  Fomenko \\  on the occasion of his 80th birthday}

\tableofcontents 

\begin{abstract} In the recent paper 
\cite{Glu24},
the first author of this note proved that if  a  billiard  in a convex domain in $\mathbb{R}^n$
    is simultaneously projective and  Minkowski, then  it is the  standard Euclidean billiard in an appropriate Euclidean structure. The proof was quite complicated and required high smoothness.  Here we present a direct simple proof of this result which works in $C^1$-smoothness. In addition we prove  the semi-local and local versions of the result  

{\bf     MSC 2010:} 37C83;   37D40; 53B40  

{\bf Key words:} Billiard, Minkowski Billiard, Projective Billiard, Binet-Legendre Metric
\end{abstract}

\section{Introduction}

Projective
and  Finsler  billiards were introduced  respectively by S. Tabachnikov   in \cite{Tab97} and  by   E.  Gutkin and S.  Tabachnikov in \cite{GT02}. The Finsler metric corresponding to  Minkowski  billiards considered in the present note is the so-called Minkowski Finsler metric, that is, the corresponding Finsler function is a Minkowski norm. We call  such billiards  {\it Minkowski billiards}.    We recall the definitions  of projective billiards  and Minkowski  billiards below. These billiards  
generalise the {\it standard billiards} in convex domains, when the angle of reflection is equal to the angle
of incidence.

Additional motivation to study   Minkowski billiards  comes from symplectic geometry. Indeed, they are closely related to Viterbo's Conjecture \cite{viterbo}, which  was recently disproved in \cite{ostrover} by using   Minkowski  billiards,  and Mahler's Conjecture  from convex geometry \cite{ako1}. For more details see e.g. the paper \cite{Glu24} and its bibliography. 

  In \cite{Glu24}, it was asked when a Minkowski  billiard is a projective one. It was proved there that this happens if and only if  the corresponding Minkowski norm is an Euclidean norm, in which case the billiard is the standard billiard. The proof is quite complicated,  requires
  nontrivial asymptotic calculations and $C^6$-smoothness. In this note we give,
   see Theorems  \ref{thm:main}    and \ref{thloc}, an easy direct proof which works in the  $C^1$-smoothness. We also prove a generalised  ``local'' version, see Theorem \ref{thgen}.

  Let us now   recall  the   definitions. A {\it projective billiard} is determined by a convex compact body  $K\subset \mathbb{R}^n $ with $n\ge 2$, whose boundary is denoted by $\partial K$ and is assumed to be 
  $C^1$-smooth, and a field $H$ of nontrivial linear involutions 
  $H_q:T_q\rr^n\to T_q\rr^n$ with $q\in \partial K$ such that  $H_q 
  $ has eigenvalues $1$ and $-1$ and such that the tangent hyperplane to $\partial K$ at $q$ is the eigenspace corresponding to the eigenvalue $1$.  

  The reflection law is defined as follows: the incoming and outgoing velocity vectors  $v_{\textrm{in}}$ and $v_{\textrm{out}}$ 
  of the billiard  trajectory at the reflection point $q\in \partial K$ are 
  related by  $v_{\textrm{out}}= H_q(v_{\textrm{in}}).$

  The above definition of projective billiard is equivalent to the definition given in \cite{Tab97}, where a projective billiard was defined by a transversal line field $\mcn$ on $\partial K$: for every point $q\in\partial K$ the corresponding line $\mcn_q$ of the latter field is the eigenspace 
  of the operator $H_q$ corresponding to the eigenvalue $-1$. 
  \begin{figure}[ht]
  \begin{center}
   \epsfig{file=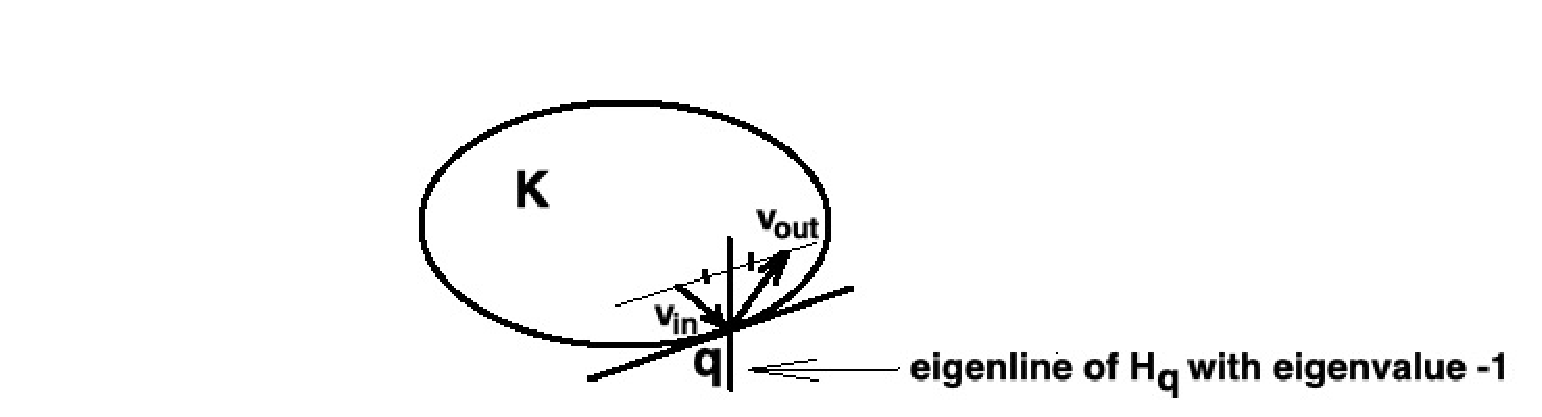, width=35em}
    \caption{A projective billiard.}
    \label{fig:1}
  \end{center}
\end{figure}

\begin{example}
      The standard billiard is a projective billiard such that at every $q\in \partial K$ the eigenvector of  $H_q$ corresponding to the  eigenvalue $-1$ is  normal   to $\partial K$.
\end{example}

 In what follows we identify the tangent spaces $T_q\rr^n$ with $\rr^n$ by translations. This identifies the  involution family $H_q$ 
  with a family of linear involutions $\rr^n\to\rr^n$, i.e., $n\times n$-matrices,  also denoted by $H_q$. Conversely, a family of non-trivial linear involutions $H_q$ satisfying assumptions above  uniquely determines a projective billiard structure on $\partial K$.

  A {\it Minkowski  billiard}\footnote{also called {\it $T$-billiard}} is determined by two strictly convex compact bodies $K\subset \mathbb{R}^n$ and $T\subset \mathbb{R}^{n*}$ with  $C^1$-smooth boundary, where $\mathbb{R}^{n*}$ is the dual vector space to  $\mathbb{R}^{n}$. We think without loss of generality that the origins 
  of $\mathbb{R}^n$  and $\mathbb{R}^{n*}$ coincide with the barycentres of $K$ and $T$, respectively, and denote by $F$ and $\widetilde F$ the Minkowski  norms corresponding to the domains $K$ and $T$, respectively. That is, $F$ and $\wt F$ are positively 1-homogeneous, i.e., $F(\lambda v)=\lambda F(v)$, $\wt F(\lambda v)=\lambda\wt F(v)$ for every $\lambda\in\mathbb R_+$ and every vector $v$, and  
  $$K=\{q\in \mathbb{R}^n \mid F(q)\leq 1\}  \ \ \textrm{ and }
T=\{p\in \mathbb{R}^{n*} \mid \wt F(p)\leq 1\}.   $$
Note that, in the initial definition of  Finsler and   Minkowski  billiards  given in    \cite{GT02}, it is assumed  in addition that $T$ is centrally-symmetric, that is, $\wt F(-v)=\wt F(v)$ for 
every $v\in\mathbb{R}^{n*}$. The corresponding Minkowski metrics are called {\it reversible} in Finlser geometry. Our definition of Minkowski billiards is due to  \cite{AO2} and allows irreversible Minkowski metrics. 
  
   The billiard motion lives in $K$ and 
     the reflection law at the point $q\in \partial K$ 
     is defined as follows.  Consider the differential $d_qF \in \mathbb{R}^{n*}$.  
     View the incoming velocity vector $v_{\textrm{in}}$ as an element of $(\mathbb{R}^{n*})^*$ via the canonical identification of $\mathbb{R}^n $  and $(\mathbb{R}^{n*})^*$,
 and take the point $p_1 \in \partial T\subset   \mathbb{R}^{n*}$  such that at this point $d_{p_1}\wt F$ is proportional
 to $v_{\textrm{in}}$ with a positive coefficient.   This condition determines $p_1$ uniquely. Next, take the line $\Lambda_q(p_1)$  parallel to $d_qF$ and  containing $p_1$:
 $$\Lambda_q(p_1):=\{p_1 + t\cdot d_qF \mid  t\in \mathbb{R}\} \subset \mathbb{R}^{n*}.$$
 If the line $\Lambda_q(p_1)$ intersects $\partial T$ in two points, 
  denote by $p_2$ its second point of intersection with $\partial T$. Then, as the vector $v_{\textrm{out}}$ we take the vector corresponding to $d_{p_2}\wt F$ viewed as a vector of $\mathbb{R}^n $ 
 via the canonical identification of $\mathbb{R}^{n} $ and $(\mathbb{R}^{n*})^*$. In the case, when the line $\Lambda_q(p_1)$ is tangent to $\partial T$, so $p_1$ is the unique point of their intersection, we set $v_{ \textrm{out}}= v_{\textrm{in}}$. 
 \begin{figure}[ht]
  \begin{center}
   \epsfig{file=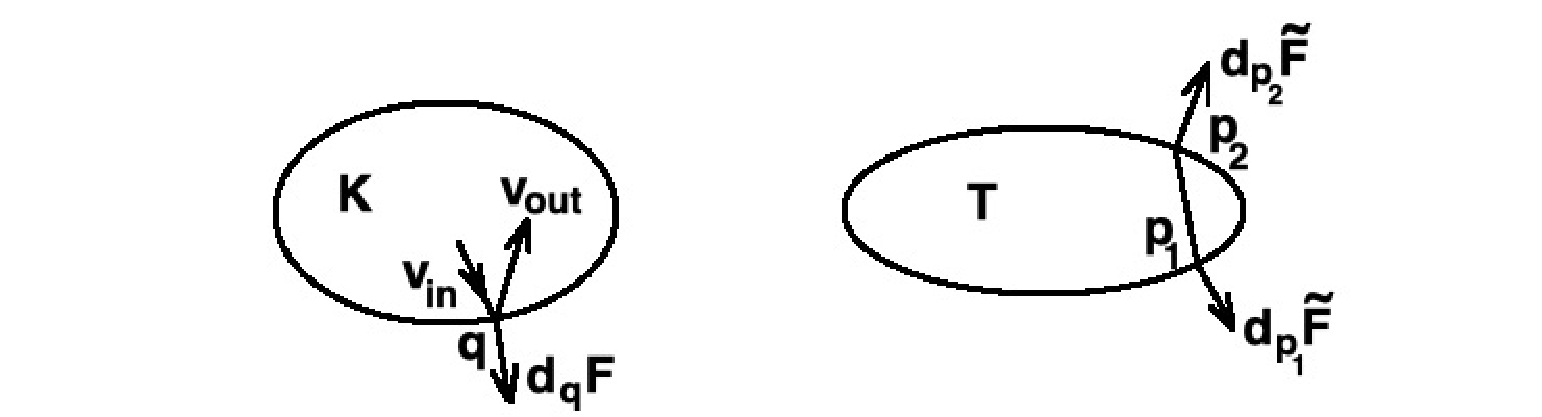, width=35em}
    \caption{A Minkowski billiard.}
    \label{fig:2}
  \end{center}
\end{figure}

\begin{example} \label{ex:2}
    If $T$ is an ellipsoid, then the Minkowski billiard is the standard billiard in the Euclidean structure such that the polar dual  of the 1-ball is a translation of this ellipsoid. In more detail, let $B:\mathbb R^{n*}\to \mathbb R^{n*}$ 
  be a linear transformation that sends $T$ to a ball. Let $B^*:\mathbb R^n\to\mathbb R^n$ be its dual, which is defined by the condition that for every $\alpha\in \mathbb R^{n*}$ and $v\in\mathbb R^n$ one has $(B(\alpha))(v)=\alpha(B^*(v))$. Then   $(B^*)^{-1}$ sends orbits of the  Minkowski billiard in $K$ to orbits of the standard billiard in its  image $(B^*)^{-1}(K)$. \end{example}

The  regularity assumptions in Theorem \ref{thm:main} below is that $\partial K$ and $\partial T$ have $C^1$-smooth boundary and strictly convex\footnote{That is, every line 
intersects  the boundary in at most two points}. These assumptions are clearly necessary for the reflection law  to be well-defined. Note that if a convex body has a differentiable boundary\footnote{That is, locally, there exists an injective  differentiable  mapping  from a small $(n-1)$-dimensional neighborhood of the origin to $\mathbb{R}^n$ or $\mathbb{R}^{n*}$ whose  Jacobi matrix has  maximal rank $n-1$ and  whose image coincides with the  intersection of a small neighborhood of the image of the origin with the 
boundary of the body}, then the boundary is $C^1$-smooth. Recall that strict convexity is equivalent to the property that every supporting hyperplane has precisely one common point
with  the body. 
The field  $H_q$ has no regularity assumption, it does not even need to be continuous in $q$.  

Our main ``global''
result is the following Theorem:
 \begin{theorem}\label{thm:main}
     Suppose for a convex body $T\subset \mathbb{R}^{n*}$ there exists a convex body $K$ equipped with a projective billiard structure on $\partial K$ such that the trajectories of the Minkowski billiard defined by $T$ coincide (up to reparametrization) with trajectories of the projective billiard. 
     Then, $T$ is an ellipsoid. 
 \end{theorem}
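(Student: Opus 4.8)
The plan is to translate the hypothesis into a pointwise condition on $\partial T$ alone, reduce it to the planar case, and there solve a functional equation for the support function of $T$.

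\emph{Step 1: a pointwise reformulation.} Fix $q\in\partial K$, put $\xi:=d_qF\in\rr^{n*}$, so that the tangent hyperplane to $\partial K$ at $q$ is $\ker\xi$, and let $\mcn_q=\rr w_q$ be the $(-1)$-eigenline of $H_q$, normalised by $\xi(w_q)=1$; thus $H_qv=v-2\,\xi(v)\,w_q$. Between reflections the Minkowski trajectories are straight segments in $K$, and $K$ is strictly convex with $C^1$ boundary, so every direction $v$ with $\xi(v)\neq0$ is realised as an incoming velocity at $q$ by some Minkowski trajectory; comparing this trajectory with the projective trajectory through the same boundary points, and using that both reflection laws are involutions which fix the directions with $\xi(v)=0$, one gets that the Minkowski reflection at $q$ coincides, as a map of directions, with $v\mapsto H_qv$. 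Unwinding the Minkowski reflection law, this says: whenever $p,p'\in\partial T$ are the two endpoints of a chord of $T$ parallel to $\xi$, the vector $\nabla\wt F(p')\in(\rr^{n*})^*=\rr^n$ is a \emph{positive} multiple of $R_\xi\bigl(\nabla\wt F(p)\bigr)$, where $R_\xi v:=v-2\,\xi(v)\,w_q$ is the linear reflection with $(+1)$-eigenspace $\ker\xi$ and $(-1)$-eigenspace $\rr w_q$. (Here $\nabla\wt F(p)=d_p\wt F$ is, up to the Euler normalisation $\langle\nabla\wt F(p),p\rangle=\wt F(p)$, the Gauss image of $p$.) Since the Gauss map of the strictly convex $C^1$ body $K$ is a bijection onto the directions, this holds for \emph{every} direction $\xi\in\rr^{n*}$, and $R_\xi$ — in particular its $(-1)$-line $\ell(\xi):=\rr w_q$ — depends only on the direction of $\xi$, because the chord construction never uses $K$.

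\emph{Step 2: an explicit $R_\xi$, and reduction to the plane.} Applying the condition of Step 1 to the chord of $T$ through the origin in direction $\xi$ — whose endpoints are $\xi/\wt F(\xi)$ and $-\xi/\wt F(-\xi)$ and whose conormals are $\nabla\wt F(\xi)$, $\nabla\wt F(-\xi)$ by $0$-homogeneity of $\nabla\wt F$ — and using Euler's identity, a one-line computation gives
$$w_q=w(\xi)=\tfrac12\Bigl(\frac{\nabla\wt F(\xi)}{\wt F(\xi)}-\frac{\nabla\wt F(-\xi)}{\wt F(-\xi)}\Bigr),\qquad \xi\bigl(w(\xi)\bigr)=1,$$
so $R_\xi$ is determined by $\wt F$. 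Now let $P\subset\rr^{n*}$ be any $2$-plane through the origin. Restricting the condition of Step 1 to those chords that lie in $P$ gives the \emph{same} condition for the planar body $T\cap P\subset P$: indeed $\wt F|_P$ is the Minkowski norm of $T\cap P$, the differential of $\wt F|_P$ is the restriction of $d\wt F$, and the central chord in direction $\xi\in P$ already lies in $P$. Hence, granting the case $n=2$, every central $2$-dimensional section of $T$ is an ellipse, and therefore $T$ is an ellipsoid (a convex body all of whose central planar sections are ellipses is an ellipsoid). It remains to prove the case $n=2$.

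\emph{Step 3: the planar case.} Choose — this is the crux, see below — a linear change of coordinates on $\rr^{2*}$ (not affecting the problem) after which every $R_\xi$ is the orthogonal reflection $v\mapsto v-2\frac{\langle v,\xi\rangle}{\langle\xi,\xi\rangle}\xi$, i.e.\ after which $w(\xi)\parallel\xi$. In these coordinates the condition of Step 1 says: the chord of $T$ parallel to $\xi$ through the point of $\partial T$ with outer unit conormal $\hat e(\psi)=(\cos\psi,\sin\psi)$ ends at the point whose conormal is the reflection of $\hat e(\psi)$ in $\xi^{\perp}$. Writing the two endpoints as the points of $\partial T$ with conormals $\hat e(m-\delta)$ and $\hat e(m+\delta)$, this becomes the statement that $p(m+\delta)-p(m-\delta)$ is parallel to $\hat e^{\perp}(m)=(-\sin m,\cos m)$; inserting the support-function parametrisation $p(\psi)=h(\psi)\hat e(\psi)+h'(\psi)\hat e^{\perp}(\psi)$ and simplifying yields the functional equation
$$h(m+\delta)-h(m-\delta)=2\,h'(m)\sin\delta\qquad(\text{all }m,\delta),$$
for the ($C^1$) support function $h$ of $T$. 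Comparing Fourier coefficients in $m$ forces $c_k(\sin k\delta-k\sin\delta)=0$ for all $\delta$, hence $c_k=0$ for $|k|\ge2$, so $h(\psi)=a_0+a_1\cos\psi+b_1\sin\psi$, i.e.\ $T$ is a disc; undoing the coordinate change, $T$ is an ellipse.

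\emph{The main obstacle.} I expect the normalisation at the start of Step 3 to be the hard part: a priori $\ell(\xi)=\rr w(\xi)$ is only a continuous function of $\xi$ (since $\wt F$ is merely $C^1$), and one must show that the family $\{R_\xi\}$ can be simultaneously conjugated to orthogonal reflections — equivalently, that $\xi\mapsto w(\xi)$ has the form $\xi\mapsto A\xi/\xi(A\xi)$ for a single positive-definite isomorphism $A\colon\rr^{n*}\to\rr^n$. This forces one to use, beyond the value of $w(\xi)$ coming from the central chord, the full content of the condition of Step 1 for the \emph{non-central} chords in direction $\xi$ (and is presumably where the Binet–Legendre metric of $\wt F$ provides the isomorphism $A$). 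Everything downstream — the functional equation with its Fourier analysis, and the passage from planar sections to the ellipsoid — is then routine.
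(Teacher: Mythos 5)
Your Steps 1 and 2 are sound and track the paper's own reduction (the chord-endpoint/conormal condition, its independence of $K$, and the formula for $w(\xi)$ from the central chord are all correct), but the proof has a genuine gap exactly where you flag it: the normalisation opening Step~3. You need a \emph{single} positive-definite isomorphism $A\colon\rr^{n*}\to\rr^n$ with $w(\xi)=A\xi/\xi(A\xi)$ for all $\xi$, i.e.\ a simultaneous conjugation of the whole family $\{R_\xi\}$ to orthogonal reflections, and nothing in your argument produces it: the explicit formula for $w(\xi)$ from the central chord gives only a continuous, a priori completely unstructured line field $\xi\mapsto\rr w(\xi)$. This is not a technical loose end — it is the heart of the theorem (once the $R_\xi$ are orthogonal reflections in one Euclidean structure, the dual maps are hyperplane reflections preserving $T$, and $T$ is forced to be a round sphere in that structure, so your planar Fourier computation and the ``all central sections are ellipses'' reduction become unnecessary detours). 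Everything downstream of the normalisation is routine, as you say, but everything downstream is also not where the difficulty lives.

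The paper fills precisely this hole with two ideas absent from your proposal. First, a distribution/foliation argument shows that the chord-swapping involution $I$ on $\partial T$ (your $p\mapsto p'$) is the restriction of a \emph{global affine involution} $A_q$ of $\rr^{n*}$ whose linear part is $H_q^*$, the dual of your $R_\xi$: one transports the field of tangent hyperplanes of the ``northern hemisphere'' by $H_q^*$, checks via the projectivity hypothesis that the ``southern hemisphere'' integrates the transported distribution, and identifies the two leaves through an equator point. Since $A_q$ preserves the compact body $T$, it fixes the barycentre and is therefore linear, equal to $H_q^*$. Second, the Binet--Legendre metric of $T$ is preserved by every linear map preserving $T$; hence all the $H_q^*$ are orthogonal reflections in this one Euclidean structure — this is exactly the isomorphism $A$ you are ``presuming'' exists. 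Without an argument of this kind (you only control how $R_\xi$ acts on conormals, never that its dual acts affinely on the points of $\partial T$ themselves), Step~3 cannot start, so the proposal as written does not prove the theorem. A minor additional point: your planar functional equation should read $\bigl(h(m+\delta)-h(m-\delta)\bigr)\cos\delta=\bigl(h'(m+\delta)+h'(m-\delta)\bigr)\sin\delta$ rather than the stated one, though both force $h$ to contain only harmonics of order $\le 1$.
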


 We also give a simple proof of the following  ``semi-local'' version of Theorem \ref{thm:main}. To state it, let us consider germs of strictly convex $C^1$-smooth hypersurfaces $S=\partial T\subset\rr^{n*}$ at a point $O_S$ 
 and  $U=\partial K\in\rr^n$ at a point $O_U$. 
 We consider that $U$ and $S$ are level hypersurfaces of local $C^1$-smooth functions $F$ and $\wt F$ respectively without critical points, and the line through $O_S$ parallel to the vector $d_{O_U} F$ is tangent to $S$ at $O_S$. Then for every $q\in U$ close to $O_U$ the above Finsler billiard map $v_{\textrm{in}}\mapsto v_{\textrm{out}}$ is well-defined on all the incoming vectors $v_{in}\in T_qU$ close enough to the translation image to $T_qU$ of the vector $d_{O_S}\wt F$. 
 
\begin{theorem} \label{thloc} Let in the above conditions the Finsler billiard map $v_{\textrm{in}}\mapsto v_{\textrm{out}}$ be given 
by  a projective billiard structure on $U$: there exists a family of 
linear involutions $H_q:T_q\rr^n\to T_q\rr^n$, $q\in U$, with eigenvalues $\pm1$, that fix all the vectors tangent to $U$ at $q$, 
such that for every $q\in U$ and for every $v_{in}\in T_qU$ close enough to the translation image of the vector $d_{O_S}\wt F$ the vector  $v_{\textrm{out}}$  is proportional to $H_q(v_{\textrm{in}})$ with a positive coefficient.  Then,  $S$ is a quadric.
\end{theorem}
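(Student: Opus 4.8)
The plan is to reduce the semi-local statement (Theorem \ref{thloc}) to an infinitesimal, pointwise condition at a single point $O_S\in S$, and then show this condition forces the second fundamental form of $S$ to be that of a quadric. I would work entirely in the cotangent picture: the Minkowski reflection at $q\in U$ is the map on $\partial T=S$ sending $p_1$ to the second intersection $p_2$ of the chord of $S$ in direction $d_qF$ through $p_1$, and the hypothesis is that for all $q$ (close to $O_U$) and all admissible $p_1$ the corresponding outgoing covector $d_{p_2}\wt F$ depends on $(q,p_1)$ only through the data $(d_qF, p_1)$ \emph{in a projective-linear way} — namely via the involution $H_q$, which acts projectively on the space of covectors with a fixed hyperplane of fixed points. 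The key observation is that, since $H_q$ fixes $T_qU$ pointwise and has $d_qF$'s line as $(-1)$-eigenspace direction, the induced action on $\rr^{n*}$ is a projective involution whose $1$-eigenspace is the hyperplane $\mathrm{Ann}(T_qU)^{\perp}$ and whose $(-1)$-eigen-direction is the line $\rr\cdot d_qF$; so the map $p_1\mapsto p_2$, read projectively, must be the restriction to $S$ of a projective involution of $\rp^{n-1}$ (or $\rp^n$) fixing a hyperplane and a point off it.

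First I would fix $O_U$ and the covector $\xi_0=d_{O_U}F$, and consider the pencil of chords of $S$ parallel to $\xi_0$; the reflection identifies the two endpoints of each such chord via the involution $H_{O_U}$ acting on covectors. Varying $q$ near $O_U$ varies the direction $\xi=d_qF$ over a full neighborhood of $\xi_0$ in $\rp^{n-1}$ (here strict convexity and $C^1$-smoothness of $U$, plus the nondegeneracy of $dF$, are used to see that $q\mapsto [d_qF]$ is a local homeomorphism, in fact the Gauss map of $U$). So the hypothesis gives, for \emph{every} direction $\xi$ near $\xi_0$, a projective involution $I_\xi$ of projective space with fixed hyperplane $\Pi_\xi$ and fixed point $[\xi]^\vee$ (the pole), whose restriction to $S$ interchanges the endpoints of every chord of $S$ in the direction dual to $\xi$ — and crucially, as $\xi\to\xi_0$, these involutions vary continuously, and the chord direction is the polar line/point of $\Pi_\xi$.

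The heart of the argument is then the following classical-flavored rigidity: a (germ of a) strictly convex $C^1$ hypersurface $S$ such that, for a full open family of directions, the "chord in that direction" map on $S$ is the restriction of a projective involution with the corresponding pole must be a quadric. I would prove this by the standard trick: for a quadric $Q$, the chord midpoints in a fixed direction lie on a hyperplane (the polar hyperplane of the point at infinity in that direction), and this \emph{polarity} property characterizes quadrics among strictly convex hypersurfaces. Concretely, from the projective-involution hypothesis one extracts, for each direction $v$ (dual to $\xi$), a hyperplane $P_v\subset\rp^n$ — the fixed hyperplane of $I_\xi$ — with the property that $P_v$ meets the chord of $S$ in direction $v$ in its "harmonic-conjugate" / fixed point; letting $v$ vary and using that the map $v\mapsto P_v$ is the restriction of a projectivity (because $I_\xi$ depends projectively on $\xi$, which in turn is forced by the linear dependence of $H_q$ on its $(-1)$-eigenline together with the fixed $1$-eigenhyperplane — I'd want to check this carefully, but any involution with given fixed hyperplane and given pole off it is \emph{unique}, so $I_\xi$ is determined by $\xi$ and the only freedom is the hyperplane $\Pi_\xi$, and the billiard data pins $\Pi_\xi$ down), one obtains a projective duality $S\to S^*$ matching $S$ with its own dual hypersurface. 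A strictly convex hypersurface that is projectively self-dual in this chord-midpoint sense is a quadric; this can be shown by picking affine coordinates in which $S$ is a graph $x_n=f(x')$ with $f(0)=0$, $df(0)=0$, writing the two endpoints of a chord in direction $v$ as $f(x'\pm t)= \dots$ and the condition that their "midpoint" (in the projective-involution sense) traces a hyperplane as $x'$ varies, and deducing a functional equation on $f$ whose solutions (under $C^1$ only, via a density/continuity argument rather than Taylor expansion) are exactly the quadratic polynomials.

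The main obstacle, and the step I'd spend the most care on, is exactly this last rigidity lemma under merely $C^1$ regularity: one cannot differentiate twice, so instead of comparing Taylor coefficients one must argue that the "polar hyperplane of direction $v$" is genuinely an affine hyperplane (not just some hypersurface) and that the resulting relation — that all chords of $S$ in a fixed direction have their distinguished point on a common hyperplane, for an open set of directions — is a finite-difference functional equation forcing $f$ to be a quadratic polynomial; here I would use continuity of $df$ together with the classical fact that a $C^1$ convex function all of whose "chord polars" are hyperplanes must be quadratic (a Jensen-type / midpoint-convexity rigidity), possibly first upgrading to the global Theorem \ref{thm:main} statement where $S=\partial T$ is closed and genuine projective-duality classification of ovaloids (quadrics) applies, then localizing. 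A secondary subtlety is bookkeeping the identifications $\rr^n\cong(\rr^{n*})^*$ and the positivity of proportionality constants, to be sure $p_1\ne p_2$ and that we really get the \emph{second} intersection point (this is where strict convexity enters), and to confirm that the projective involution $H_q$ on $T_q\rr^n$ induces a bona fide projective involution on the covector side with the claimed fixed hyperplane and pole.
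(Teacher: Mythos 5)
Your reformulation is on the right track in spirit: the paper, too, first shows that for each admissible direction $\alpha$ the chord involution $I_\alpha$ on $S$ is the restriction of a genuine affine involution $A_\alpha$ of $\rr^{n*}$, which is exactly equivalent to your ``midpoints of parallel chords lie on a hyperplane'' formulation (the fixed hyperplane of $A_\alpha$ meets each chord at its midpoint). But there are two genuine gaps. First, the passage from the hypothesis to that affine statement is asserted, not proved. The projectivity hypothesis only says that the \emph{conormals} $d_{p_1}\wt F$ and $d_{p_2}\wt F$ are related by $H_q$, i.e.\ it constrains the tangent hyperplane of $S$ at $p_2$ to be $H_q^*(T_{p_1}S)$ as a direction; it does not directly say that the point $p_2$ is the image of $p_1$ under the affine map with linear part $H_q^*$. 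Closing this requires an integration step: the paper builds the translation-invariant distribution $D$ tangent to one hemisphere, shows $\wt D=H_q^*D$ is tangent to the other, and uses uniqueness of integral leaves through a fixed equator point to conclude $A_q(S_+)=S_-$. Your ``uniqueness of the involution with given pole and fixed hyperplane'' addresses a different (and easier) question — uniqueness of the candidate map, not existence of an affine extension of $I_\alpha$.

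Second, and more seriously, the rigidity lemma that carries the whole theorem — a germ of strictly convex $C^1$ hypersurface whose parallel-chord midpoint loci are hyperplanes for an open set of directions must be a quadric — is left as ``a Jensen-type / midpoint-convexity rigidity'' with no argument, and you yourself flag that the $C^1$ local case is the hard part. The classical Bertrand--Brunn characterization is a global statement about closed convex bodies with the property for \emph{all} directions; invoking it and ``then localizing'' is circular for the semi-local Theorem \ref{thloc}, where only directions near $T_{O_S}S$ are available. The paper circumvents this entirely by a different mechanism: in dimension $2$ it forms the area-preserving compositions $B_\alpha=A_\alpha\circ A_{O_S}\to Id$ preserving $S$, extracts via the exponential map a divergence-free affine vector field tangent to $S$ (Proposition \ref{claim2}), and classifies its phase curves as conics (Proposition \ref{procon}); in dimension $n\ge 3$ it uses the Binet--Legendre metric of a cross-section to show the $A_\alpha$ generate an $O(n-1)$-action making $S$ a surface of revolution over a planar conic. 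Without a proof of your rigidity lemma in the local $C^1$ setting, the proposal does not establish the theorem.
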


Theorem \ref{thloc} with hypersurface $S$ of class $C^6$ and with 
positive definite second fundamental form  is presented in an equivalent form as \cite[theorem 1.12]{Glu24}. Its original proof given in \cite{Glu24} is quite complicated and required a nontrivial result stating that if for a given hypersurface $S$ ``sufficiently many"  planar sections  are conics, then $S$ is a quadric: see \cite[theorem 3.7]{Glu24}, \cite[theorem 4]{art}  and a stronger result in \cite[section 3]{berger}. The proof in the current note  is  more  simple, uses only elementary arguments,  and is  essentially  self-contained. 

Of course, Theorem \ref{thm:main} is a corollary of Theorem \ref{thloc}. We will still first prove, in Section \ref{sec:3},  Theorem 1, as its proof introduces two  important constructions  which will be used also in our proof of Theorem \ref{thloc}. See also  Remark \ref{rem:2}, where we commented on it and reformulated the initial  problem as a problem about the body $T$ (respectively, the hypersurface $S$)
only; the reformulation  does not require  $K$ and $U$ anymore. 

The proof of Theorem \ref{thloc}  requires additional ideas, and needs to consider separately the cases $n=2$ and $n\ge 3$, which we do in Sections \ref{sec:4} and  \ref{sec:5},  respectively. 

\begin{example} If $S$ is a quadric, the above Finsler billiard map $v_{\textrm{in}}\mapsto v_{\textrm{out}}$ is indeed given by a projective billiard structure on 
$U$. See \cite[corollary 1.9]{Glu24}. 

Indeed, the space of strictly convex quadrics $S$ (ellipsoids, strictly convex hyperboloids and paraboloids) is a connected open subset, and the corresponding Finsler reflections depend analytically on $S$. Projectivity of Finsler reflection holds if $S$ are ellipsoids, see Example \ref{ex:2}, which form an open subset of quadrics, 
and it remains valid under analytic extension.
\end{example}

We also prove the following  generalization  of Theorems \ref{thm:main} and \ref{thloc}, which is in a sense their  local version. It deals with two germs of hypersurfaces $S_1,S_2\subset\rr^{n*}$ at two distinct points $O_1$ and $O_2$ respectively (playing the role of the boundary $\partial T$), a germ of hypersurface $U\subset\rr^n$ at a point $O_U$, a vector $v_{in}\in T_{O_U}\rr^n$. We consider that 
$U$, $S_1$, $S_2$ are level hypersurfaces of germs of $C^1$-smooth functions $F$, $\wt F_1$, $\wt F_2$, respectively. 
Consider the one-dimensional vector subspace $\alpha_0$ parallel to  $O_1O_2$ in $\rr^{n*}$  and  assume that it is parallel to   $ d_{O_U}F \in  \rr^{n*} $ and $O_1O_2$ is transversal to $S_1$ and $S_2$. We assume that the vector $v_{in}$ is proportional to $d_{O_1}\wt F_1$.
 
For every one-dimensional vector subspace $\alpha\subset\rr^{n*}$ 
close enough to $\alpha_0$,  consider the  involution 
$I_\alpha$ permuting  the points of intersection of 
each line parallel to $\alpha$ with $S_1$ and $S_2$. 
Next,  impose the 
following

\vspace{1ex}
\noindent {\bf Projectivity assumption} {\it For every $\alpha$ close enough to $\alpha_0$ there exists a linear involution $H_\alpha:\rr^n\to\rr^n$ such that for every $p\in S_1$ 
the image $H_{\alpha}(d_{p}\wt F_1)$ is proportional to 
$d_{I_\alpha(p)}\wt F_2$.} 

\begin{theorem} \label{thgen} Let $S_1,S_2\subset \rr^{n*}$ be two distinct germs of hypersurfaces at points $O_1$ and $O_2$ respectively. Let $\alpha_0\subset\rr^{n*}$ be a one-dimensional vector subspace. Let the line through $O_1$  parallel to  $\alpha_0$ 
pass through $O_2$ and be transversal to $S_1$ and $S_2$.
Let $\alpha_0$ 
satisfy the above projectivity assumption.  Then $S_1$ and $S_2$ are parts of the same quadric (allowed to be a union of distinct hyperplanes).
\end{theorem}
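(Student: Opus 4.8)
The plan is to turn the projectivity assumption into a functional relation between the Gauss maps of $S_1,S_2$ and the pencil involutions $I_\alpha$, valid for all $\alpha$ near $\alpha_0$, and then to squeeze the common quadric out of that relation.

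First I would set up adapted coordinates. Choose affine coordinates $(x,y)\in\rr^{n-1}\times\rr$ on $\rr^{n*}$ in which $\alpha_0$ is the direction of the $y$-axis. Since the line $O_1O_2$ is parallel to $\alpha_0$ and transversal to both germs, after a translation $O_1=(0,f_1(0))$, $O_2=(0,f_2(0))$ with $f_1(0)\ne f_2(0)$ and $S_i=\{y=f_i(x)\}$ for $C^1$ functions $f_i$ near $0\in\rr^{n-1}$. A one-dimensional subspace $\alpha$ near $\alpha_0$ is spanned by a unique vector $(w,1)$ with $w$ near $0$, and the implicit function theorem shows that $I_\alpha$ is, in the $x$-coordinate, the map $\iota_w$ determined by $\iota_w(x)=x+(f_2(\iota_w(x))-f_1(x))\,w$, with $\iota_0=\mathrm{id}$; the Gauss maps become $\gamma_i\colon x\mapsto[(-\nabla f_i(x),1)]\in\rp^{n-1}$. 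The projectivity assumption then says exactly that for each $w$ near $0$ there is a projective transformation $\Phi_w$ of $\rp^{n-1}$ (the projective class of $H_\alpha$) with $\Phi_w\circ\gamma_1=\gamma_2\circ\iota_w$ near $0$. Two observations make this usable at $C^1$-regularity and with no assumption on the dependence of $H_\alpha$ on $\alpha$: a projective transformation is determined by its restriction to an open set, so $\Phi_w$ is uniquely determined by $\gamma_1,\gamma_2,\iota_w$ and in particular depends continuously on $w$ (this is essentially the reformulation ``about the hypersurface only'' of Remark~\ref{rem:2}); and since $I_\alpha^2=\mathrm{id}$ the same class $\Phi_w$ also governs the reverse correspondence $S_2\to S_1$, so each $\Phi_w$ is a projective involution, and at $w=0$ one gets $\Phi_0\circ\gamma_1=\gamma_2$.

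An easy case is when $\gamma_1$ is locally constant at $O_1$, i.e. $S_1$ is a piece of a hyperplane $P_1$: then $\gamma_2\circ\iota_w$ is constant for every $w$, so $\gamma_2$ is constant near $O_2$, $S_2$ is a piece of a hyperplane $P_2$, and $S_1,S_2$ lie on the degenerate quadric $P_1\cup P_2$, as allowed. The main case is everything else, and it is where I expect the real difficulty to lie: one must prove that the relation $\Phi_w\circ\gamma_1=\gamma_2\circ\iota_w$, holding for all $w$ in a neighbourhood of $0$ with every $\Phi_w$ projective, cannot occur unless $f_1$ and $f_2$ are two branches of a genuine quadric through $O_1$ and $O_2$.

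The key to the main case is that the way $\iota_w$ depends on $w$ is governed explicitly by the functions $f_1,f_2$ themselves --- not merely by their first derivatives, which is exactly why $C^1$-smoothness suffices and no higher-order calculus enters. Hence the demand that, for every $w$ in a neighbourhood of $\alpha_0$, the composite $\gamma_2\circ\iota_w$ differ from $\gamma_1$ by a projective transformation is a rigid, ``finite'' constraint: the cross-ratio of any four points of $\gamma_1(S_1)$ must equal that of the corresponding four points $\gamma_2(\iota_w(\cdot))$, simultaneously for all $w$. Together with the relation $\Phi_0\circ\gamma_1=\gamma_2$ this yields a closed system of relations for $f_1,f_2$ whose only solutions are the desired pairs of branches of a single quadric. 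As in the proof of Theorem~\ref{thloc}, the bookkeeping has to be organised separately for $n=2$, where $\alpha$ runs only through a one-parameter family and one is forced to use carefully chosen finite configurations of points and chords, and for $n\ge3$, where the freedom to move $\alpha$ in an $(n-1)$-parameter family produces an overdetermined, more tractable system together with integrability conditions; and one must treat apart the special positions of $\Phi_0$ (for instance when it is parabolic or has a fixed point in the range of $\gamma_1$). Since being parts of a common quadric is a projectively invariant conclusion and the hypotheses are preserved by projective changes of coordinates fixing the point at infinity of $\alpha_0$, one may normalise the configuration at the start to keep this analysis manageable. Finally, Theorems~\ref{thm:main} and~\ref{thloc} are recovered by taking $S_1,S_2$ to be germs of one and the same hypersurface $S=\partial T$ at two distinct points of a chord parallel to $\alpha_0$.
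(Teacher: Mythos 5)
There is a genuine gap, and it sits exactly where you yourself locate ``the real difficulty.'' Your reduction to the relation $\Phi_w\circ\gamma_1=\gamma_2\circ\iota_w$ is a reasonable reformulation of the projectivity assumption, and the hyperplane case is handled correctly. But the entire main case is then asserted rather than proved: you say that the cross-ratio constraints ``yield a closed system of relations for $f_1,f_2$ whose only solutions are the desired pairs of branches of a single quadric,'' which is precisely the content of the theorem. No such system is written down, no argument is given for why its solutions are quadrics, and the invoked devices (finite point configurations for $n=2$, overdetermined systems and ``integrability conditions'' for $n\ge3$, special positions of $\Phi_0$) are named but not used. A secondary issue: your claim that $\Phi_w$ is uniquely determined by $\gamma_1,\gamma_2,\iota_w$ because ``a projective transformation is determined by its restriction to an open set'' presupposes that the Gauss image of a $C^1$ germ is open in $\rp^{n-1}$, which fails for degenerate (e.g.\ cylindrical) hypersurfaces; these would have to be excluded or treated separately.

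The paper's proof runs through a different and fully explicit mechanism that your proposal does not touch. First, the distribution argument of Section \ref{sec:2.1} (see Remark \ref{rem:2}) upgrades the projectivity assumption from a statement about differentials to the statement that each involution $I_\alpha$ is the restriction to $S_1\cup S_2$ of a \emph{global affine involution} $A_\alpha$ of $\rr^{n*}$. Second, the compositions $B_\alpha=A_\alpha\circ A_{\alpha_0}$ form a continuous family of volume-preserving affine maps preserving $S_1$ and $S_2$ and tending to the identity as $\alpha\to\alpha_0$; a limiting argument in the affine group (Proposition \ref{claim2}) then shows that each planar section of $S_1$, $S_2$ by a plane parallel to $\alpha$ is a phase curve of a divergence-free linear non-homogeneous vector field, hence a conic (Proposition \ref{procon}). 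Third, a hypersurface all of whose nearby planar sections are conics is a quadric (Proposition \ref{thcquad}). Finally, a B\'ezout-type tangency argument shows the two quadrics coincide. Each of these four steps carries real mathematical content that your proposal would have to replace; as written, the proposal establishes the setup and the trivial case but not the theorem.
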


Of course, Theorem  \ref{thloc}, and therefore, Theorem   \ref{thm:main},  follow from Theorem \ref{thgen}. In our paper, we still first prove Theorems \ref{thm:main}, \ref{thloc} and then  Theorem \ref{thgen}. The reason is that  the first part of the proof of Theorem  \ref{thm:main} contains one of two main ideas necessary for the proof of Theorem \ref{thgen}, and the proof of  Theorem \ref{thloc} contains the second main idea. It is easier to explain these main ideas in the set up of Theorems \ref{thm:main} and \ref{thloc}, as one can use more geometric arguments.

  \subsection*{Acknowledgement } V.\,M. thanks the DFG (projects 455806247 and 529233771), and the ARC  (Discovery Programme DP210100951) for their support. A.\,G. thanks Friedrich Schiller  University Jena for partial support of his visit to Jena.  Both authors participated in the  
  program ``Mathematical Billiards: At the Crossroads of Dynamics, Geometry, Analysis, and Mathematical Physics'' of  Simons Center for Geometry and Physics at Stony Brook University. We wish to thank the center   and the organisers of the program for hospitality and partial  support. 
  Our paper answers  the question which aroused in  discussions with  A.\,Sharipova and S.\,Tabachnikov during this program. We   thank them and Yu.\,S.\,Ilyashenko for helpful discussions.  V.\,M. is grateful to T.\,Wannerer for explaining related elementary but necessary statements in convex geometry.

\section{Proof of Theorem \ref{thm:main}} \label{sec:3}
{ Assume a projective billiard  in $K$ with the field 
 $H$ of $n\times n$ matrices at the points of $\partial K$ has   the same trajectories as the  Minkowski  billiard in $K$ constructed by $T\subset \mathbb{R}^{n*}$.  We will work in notations of Introduction and in particular identify 1-forms on $\mathbb{R}^n$ with vectors of 
$\mathbb{R}^{n*}$ and one-forms on $\mathbb{R}^{n*}$ with vectors of  $\mathbb{R}^n$. We  assume   that  the origins in $\mathbb{R}^n$ and $\mathbb{R}^{n*}$  are the barycenters of   $K$ and $T$, respectively. 

In Section \ref{sec:2.1} we construct a family of  affine involutions $A_q, \, q\in \partial K$  that 
preserve the convex body $T$. From the construction it will be clear, that the corresponding linear transformations  are precisely the dual transformations to $H_q$ which we denote by $H^*_q$.  Moreover,  for different $q_1,q_2\in \partial K$ the corresponding $A_{q_1}, A_{q_2}$ are different. 

The construction   works with necessary evident amendments in the set up of Theorems \ref{thloc}, \ref{thgen}, we comment on it in Remark \ref{rem:2}. The affine involutions $A_q$ will play important role in the proofs of  Theorems \ref{thloc}, \ref{thgen}.

In Section \ref{sec:2.2}  we recall, following \cite{MatTroy2012},  a geometric construction which associates   an Euclidean structure to a convex body.  The construction will
be useful  also in the proof of Theorem \ref{thloc}. 
The  affine involutions $A_q$ preserve the Euclidean structure and therefore are usual reflections, in the Euclidean structure,  with respect to hyperplanes.  Then,  $T$ is invariant with respect to the whole group  $O_n $  implying that it is an ellipsoid, see \ref{sec:2.2} for details.} 

\subsection{{ The existence of an affine  involution of $\mathbb{R}^{n*}$ preserving $T$}} \label{sec:2.1}

  Take 
  a point $q\in \partial K$ and  consider the mapping  $I:\partial T\to \partial T$ that 
	 sends  a point  $p_1\in \partial T \subset \mathbb{R}^{n*}$  to 
	the other point of the intersection of $\partial T$ with the line $\Lambda_q(p_1):=\{  p_1 + t \cdot d_qF \mid t\in \mathbb{R}\}$
 passing through $p_1$ and  generated\footnote{The differential $d_qF$ of the function $F$ at $q$ is a 1-form on $\mathbb{R}^n$,
 so it is a vector  of $\mathbb{R}^{n*}$}
	by   $d_qF\in \mathbb{R}^{n*}$. 
		At the points  $p_1\in \partial T$ in which $d_qF$  is tangent to $\partial T$, we set $ I(p_1)=p_1$.

		By construction, $I$ is a  well-defined involution. The set of its fixed  points is topologically 
		a $(n-2)$-sphere which we call {\it equator}. Equator divides $\partial T$ in two connected components, one of  which we call {\it the northern } and another  {\it the  southern hemispheres}\footnote{{ In the proof of Theorem \ref{thgen}, the role  of  the   hemispheres play $S_1$ and $S_2$}}. We  denote the northern hemisphere by  $S_+$. In our convention, the equator is not contained in the hemispheres; of course, it is contained in the closure of each hemisphere.

	Next, let us consider the endomorphism $H^*_q:\mathbb{R}^{n*} \to \mathbb{R}^{n*}$ 
 dual\footnote{That is, $H^*(p_1)(v)= p_1(H(v))$  for any 1-form $p_1\in \mathbb{R}^{n*}$ and any vector $v\in \mathbb{R}^n$} to $H_q$. 
 
 Observe  that $d_qF$ is an 
 eigenvector of the operator $H^*_q$  with eigenvalue $-1$. Indeed,  consider a basis in $\mathbb{R}^n$  such that its first element, we call it $\mcn$,  is an  eigenvector of $H_q$ with eigenvalue $-1$, and all other, we call them 
 $v_1,\dots, v_{n-1}$, are eigenvectors with eigenvalue $1$. By assumptions, the vectors  $v_1,\dots, v_{n-1}$ are tangent to $\partial K$ at $q$, so $d_qF(v_i)=0$. 
 Then, $$H^*_q(d_qF)(\mcn)= d_qF(H_q(\mcn))= -d_qF(\mcn) \ \textrm{and} $$ $$
 H^*_q(d_qF)(v_i)=d_qF(H_q(v_i))=d_qF(v_i)=0=- d_qF(v_i).$$ 
We see that  the 1-forms $-d_qF$ and $H^*_q(d_qF)$ have the same values on the basis vectors $\mcn, v_1, \dots, v_{n-1}$ and therefore coincide. 
Clearly, dual endomorphisms have the same eigenvalues and the same  dimensions of eigenspaces corresponding to each given eigenvalue, so $H^*_q$ has an eigenspace of dimension $n-1$  with eigenvalue $1$.

Consider now the open set $$\mathbb{T}=\bigcup_{p_1\in S_+}\Lambda_q(p_1) =  \{p_1 + t \cdot d_qF\mid  t \in \mathbb{R}, p_1\in S_+ \} \subset \mathbb{R}^{n*}, $$
which is the union of all the lines intersecting the northern hemisphere and 
parallel to the vector $d_qF$.  As $T$ is strictly convex, for each such a line
$\Lambda_q(p_1)$,  $p_1\in S_+$, its  intersection  with $S_+$  coincides with the point $p_1$.  On $\mathbb{T}$, let us define
the codimension one distribution   $D$ (i.e., subspace field) by  the  following conditions:
\begin{enumerate}
    \item The distribution is invariant with respect to the translation  
    $p_1\mapsto p_1 +  t \cdot d_qF$ with arbitrary $t$. 
    \item The northern hemisphere $S_+$ is tangent to the distribution 
    $D$, i.e., $d_{p_1}\wt F(D_{p_1})=0$ for every $p_1\in S_+$. See Figure 3. 
\end{enumerate}
The distribution is well defined on the set $\mathbb{T}$ and is at least continuous. It is integrable, since the northern hemisphere and all its translations with respect to $t \cdot d_qF$ are tangent to the distribution. In particular,  $D$ can be rectified by a  $C^1$-smooth diffeomorphism 
that fixes each line forming $\mathbb T$, acts along it as a translation and 
sends the northern hemisphere to a hyperplane. 

 Now, take the distribution $\wt D:= H_q^*D.$  That is, at every point of $\mathbb{T}$, the distribution $\wt D$ consists of all vectors of the form $H^*_q(\alpha)$ with $\alpha\in D$.
 
Observe that that the southern hemisphere is tangent to the distribution $\wt D$. This follows from Minkowski and projective billiard coincidence. Indeed, for every $p_1$ from the northern hemisphere  denote by  $ p_2$  the point of the southern hemisphere such that the line $p_1 p_2$ is parallel to $d_qF$. The projective billiard reflection coincides, up to reparametrization of orbits,  with the Minkowski billiard reflection. This implies that the vector $H_q(d_{p_1}\wt F)$ is proportional to $d_{ p_2}\wt F$. 
On the other hand, 
$$H_q(d_{p_1} \wt F)(\wt D)= d_{p_1}\wt F(H_q^*\wt D)=d_{p_1}\wt F(D)=0,$$
since $H_q^*$ is an involution. 
Therefore, $d_{ p_2}\wt F(\wt D)=0$, and the southern hemisphere is 
tangent to $\wt D$. See Figure 3.

\begin{figure}[ht]
  \begin{center}
   \epsfig{file=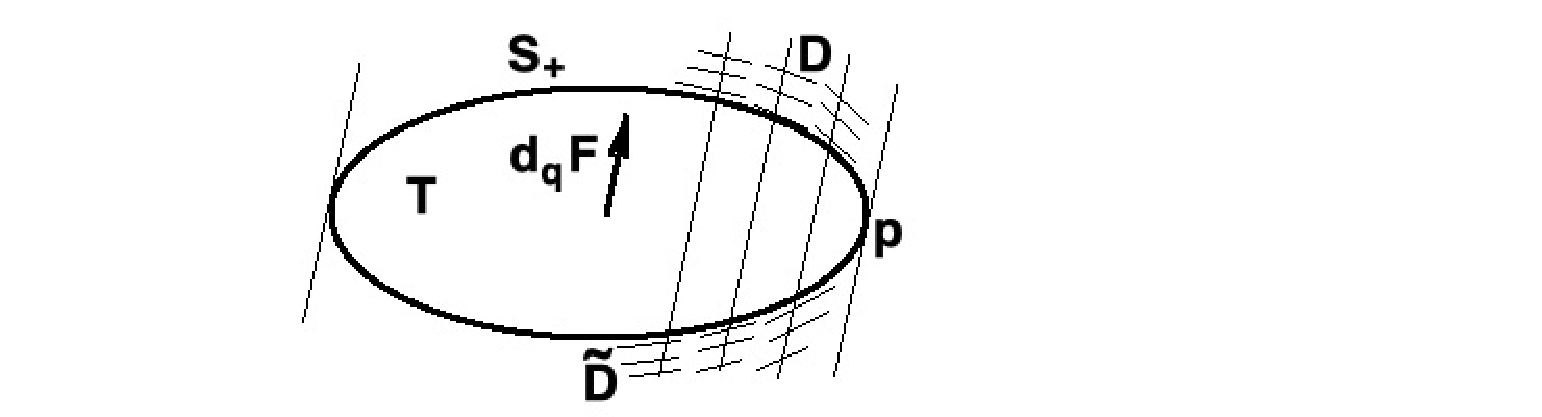, width=35em}
    \caption{The distributions $D$ and $\wt D$.}
    \label{fig:3}
  \end{center}
\end{figure}

Since the distribution $\wt D$ is also invariant with respect to  translations by vectors  $t \cdot d_qF$, it is integrable and its integral manifolds are translations of the southern hemisphere.

Next, take a point $p$ of the equator. 
Note that the only integral manifold of the distribution $D$  containing  the point $p$ 
is the northern hemisphere $S_+$. Similarly, the only integral manifold of the distribution $\wt D$  containing  the point $p $ is the southern hemisphere.

Now, consider the 
		affine transformation $$  A_q: \mathbb{R}^{n*} \to \mathbb{R}^{n*} \ ,  \ \   p_1 \mapsto H_q^*(p_1) - H_q^*(p) + p. $$ 
 This affine transformation  clearly sends $p$ to $ p$. 
Since $d_qF$ is  an eigenvector of $H^*_q$, the 
mapping $A_q$ induces a well defined 
  affine transformation on 
  the quotient of $\mathbb{R}^{n*}$ by the 
  translations with respect to vectors $t\cdot d_qF$.  The structure of eigenvalues and eigenspaces of $H_q^*$ and the property $A_q(p)=p$ imply that
  $A_q$ acts by identity on  the quotient space. Indeed, it sends the line  $\Lambda_q(p)$ to itself. Moreover, since the eigenspace corresponding to $-1$ is ``killed'' by taking the quotient,  the corresponding linear transformation has eigenvalue $1$ with eigenspace of dimension $n-1$, an  is  therefore the identity. 
Thus,  $A_q$ sends  every line $\Lambda_q(p_1)= \{p_1+ t d_qF \mid t\in \mathbb{R}\}$  to itself. In particular,  
$A_q$  is an affine involution.

As the linear mapping corresponding to $A_q$ is $H_q^*$ and $A_q$ sends  every line $\Lambda_q(p_1)$  to itself, 
   the pushforward 
 of $D$ with  respect to $A_q$ coincides with $\wt D.$   Then, $A_q$ sends the northern semisphere to the southern hemisphere, since  the northern hemisphere (resp. the southern hemisphere)  is 
 the only integral manifold of $D $  (resp. $\wt D$)
 whose closure  contains $p$, and since $A_q$ takes $p$ to itself and $D$ to $\wt D$. As $H^*_q$ is an involution, $A_q$ is also an involution.  Therefore,  $A_q$ permutes the southern and the southern hemispheres.

\begin{remark} \label{rem:2}
Up to now, we did not really use the condition that $T$ is compact. The proof above  works with necessary evident amendments in the  ``semi-local'' and   ``local'' cases. Namely,  under the assumptions and in the notations of Theorem \ref{thloc} (respectively, Theorem \ref{thgen}), we have shown that for 
every one-dimensional vector subspace $\alpha\subset \mathbb{R}^{n*}$ which  is sufficiently close to a subspace parallel to the tangent hyperplane  $T_{O_S}S$  (respectively, for any one-dimensional subspace 
which is sufficiently close to the subspace parallel to the line   $O_1O_2$),  the natural analog of the 
mapping $I_\alpha$   coincides with the  restriction of an affine involution to $S$ (respectively, $S_1\cup S_2)$.  This affine involution will be denoted by  $A_\alpha$, it is constructed by a subspace $\alpha$ and $S$ (respectively,     $S_1 \cup S_2$).  The subspace $\alpha$ and the point $q\in U$     are related as follows: at the point $q$, the differential of $F$ is parallel to $\alpha$. 

Note that the  linear mapping corresponding to this   affine mapping, which we call $H^*_\alpha$,   automatically  has  $n-1$-dimensional eigenspace with eigenvalue 
$1$   and the subspace $\alpha$ is an eigenline of $H^*_\alpha$
with eigenvalue $-1$. 

 We would like to emphasize  that the  definitions of $I_\alpha$, $A_\alpha$ 
and $H^*_\alpha$ above do not require $K$ (respectively, $U$).
\end{remark}
Theorems \ref{thm:main},  \ref{thloc} and \ref{thgen} are therefore reduced to the following problem(s):

\vspace{1ex}
\noindent{\it Suppose for the boundary  $\partial T$  of a strictly convex body (respectively, for a  local strictly convex hypersurface $S$, respectively  for $S_1 \cup S_2$)  and  for every one-dimensional vector subspace $\alpha\subset\rr^{n*}$ (respectively, for every $\alpha$ which is sufficiently close to a line parallel to the tangent hyperplane to $S$ at $O_S$, respectively,  for every $\alpha$ which is sufficiently close to $\alpha_0$ parallel to $O_1O_2$), the mapping $I_\alpha$ is the restriction of a certain affine mapping $A_\alpha$  to $\partial T$ (respectively, $S$, respectively $S_1 \cup S_2$). We need to prove that $\partial T$   is a quadric (respectively, $S$ is a part of a quadric, respectively  $S_1 \cup S_2$ is a part of a quadric, assuming that the union of any two hyperplanes is a quadric).} 

\vspace{1ex}

\subsection{{ $T$ is the  sphere in its Binet-Legendre metric} }  \label{sec:2.2}

From this point and until the end of this section we work under the assumptions of Theorem \ref{thm:main}, so  $T $ is a compact body. As 
$A_q$ permutes the northern and the southern hemispheres, it    preserves the convex body $T$.  
But then $A_q$ preserves the barycenter of $T$ which is the origin of $\mathbb{R}^{n*}$ by our assumptions. Then, $A_q$ is a linear map and therefore it coincides with $H_q^*$.

		 Next, consider the Binet-Legendre metric corresponding to $\wt F$.
   Recall that Binet-Legendre metric   is an
  Euclidean structure   canonically constructed in \cite{MatTroy2012} by  a convex body $T$  using the following   formula: for two vectors $v_1, v_2 \in \mathbb{R}^n$, their inner  product is given by 
\begin{equation} \label{eq:10}
    \langle v_1, v_2\rangle_{BL} = \frac{n+2}{\lambda(T)}\int_{T} \alpha(v_1) \alpha(v_2) d\lambda(\alpha),
\end{equation}  
where $\lambda$ is a Lebesgue measure on $\mathbb{R}^{n*}$, and $\lambda(V)$ is the volume of $T$ in this measure. In other words,     up to a constant, the Binet-Legendre metric is the 
$L^2$-scalar product of the vectors $v_1, v_2$ viewed as elements of $(\mathbb{R}^{n*})^*$ restricted to 
$T$. The   formula  \eqref{eq:10} is not affected by  the choice of the Lebesgue measure $\lambda$ (i.e., by its multiplication by constant)  and  defines  an inner product on $\mathbb{R}^{n}$. As an inner product on $\mathbb{R}^{n*}$ which we will use in our further considerations and also call Binet-Legendre metric, we simply take the  dual of the one given by \eqref{eq:10}. 

Note  that there exist many different geometric constructions of an Euclidean structure by a convex body, say the one coming from the so called John ellipsoid, see e.g. \cite{MTJohn}. See also \cite[\S 2.3.2]{MMP} for different  constructions using  Finsler Browning motion.  
The advantage of the formula \eqref{eq:10}  over certain other geometric constructions, e.g.,  over the  ``averaging'' construction from \cite{MRTZ, M09},   is that it requires no smoothness of $\partial T$ and behaves well under continuous  perturbations of the body $T$, see e.g.  \cite[Theorem B]{MT17}.

Since  $H_q^*$ preserves  
$T$,   it also preserves the Binet-Legendre metric.  Therefore, the mapping $H_q^*$ is simply the orthogonal 
reflection, in the Binet-Legendre metric, with respect to a  hyperplane containing the origin.

Recall  that up to now, in all arguments,  the point $q$ was a fixed  arbitrary chosen point of $\partial K$. The reflection $A= H_q^*$  was constructed by this point and has the property that  the hyperplane of the reflection is orthogonal, in the Binet-Legendre metric, to  $d_qF$.  
As we can choose the point $q$ and therefore $d_qF$ arbitrarily (strict convexity of the body $K$), $T$ is invariant with respect to the reflections  from all hyperplanes  containing the origin. Then, $T$   is the sphere
in the Euclidean structure given by the Binet-Legendre metric. Then, it is an ellipsoid in $\mathbb{R}^{n*}$ and Theorem \ref{thm:main} is proved. 

\section{Proof of Theorem \ref{thloc}} 
\subsection{Proof of Theorem \ref{thloc} for $n=2$} \label{sec:4}
In the case under consideration, $S$ is a strictly convex curve in $\rr^{2*}$. We will identify each tangent line to $S$ with the one-dimensional vector subspace $\alpha\subset\rr^{2*}$ parallel to it. 
For every tangent line $\alpha$ to $S$, the corresponding 
affine involution $A_\alpha$ from Remark \ref{rem:2} depends only 
on the line $\alpha$. 
Since $S$ is strictly convex, its points are in one-to-one correspondence with tangent lines and with the corresponding subspaces in $\rr^{2*}$. Therefore, from now on we identify a point in $S$ with the corresponding tangent line and consider  that $A_\alpha$ is a family of affine involutions parametrized by points $\alpha\in S$. Its dependence on $\alpha$ is at least  continuous.   By construction, $A_\alpha$ fixes $S$, and  it fixes $\alpha$, since it fixes the corresponding tangent line. 

Consider the Lie group $G$ of affine transformations of the plane 
$\rr^{2*}$, such that the correspoding linear transformations  have  determinants $\pm1$. The group $G$ clearly   contains all    $A_{\alpha}$. Its connected component $G^o$ of the identity consists of area-preserving affine transformations. Let $\gg$ denote its Lie algebra, which is the algebra of divergence-free linear non-homogeneous\footnote{That is,  its components are polynomials of degree $\le 1$ in standard coordinates of $\mathbb{R}^{2*}$} vector fields. The exponential map of the Lie group $G$  is a diffeomorphism of a neighborhood of the origin in its Lie algebra $\gg$ onto a neighborhood of the identity in $G$. 
\medskip

\begin{proposition}
    \label{claim} There exists a vector $v\in\gg$, $v\ne \vec 0$,  such that for every $t\in\rr$ small enough the element $X^t:=\exp(tv)\in G$ leaves the curve $S$ invariant. In other terms, $S$ is tangent to the divergence-free linear non-homogeneous vector field representing $v$.\end{proposition}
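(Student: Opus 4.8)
The plan is to exploit the family $\{A_\alpha\}_{\alpha\in S}$ of affine involutions fixing $S$, together with the fact that each $A_\alpha$ also fixes the line $\alpha\in S$, and to extract from it a one-parameter subgroup of $G$ preserving $S$. First I would set up the map $S\to G$, $\alpha\mapsto A_\alpha$, which is at least continuous; since each $A_\alpha$ is an involution with $n-1=1$ dimensional $+1$-eigenspace (the tangent line $\alpha$) and a $1$-dimensional $-1$-eigenspace, its linear part has determinant $-1$, so $A_\alpha$ lies in $G\setminus G^o$. Composing two such involutions, $g(\alpha,\beta):=A_\alpha\circ A_\beta$, gives a continuous family of elements of $G^o$ with $g(\alpha,\alpha)=\mathrm{id}$, each of which preserves $S$.

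The key step is to take a ``derivative'' of this family. Since the exponential map $\exp:\gg\to G$ is a local diffeomorphism near the identity, for $\beta$ close to a fixed $\alpha$ I can write $g(\alpha,\beta)=\exp(w(\alpha,\beta))$ with $w(\alpha,\beta)\in\gg$ depending continuously on $\beta$ and $w(\alpha,\alpha)=\vec 0$. I would then want to show that the ``direction'' $w(\alpha,\beta)/\|w(\alpha,\beta)\|$ (or, more robustly, a subsequential limit of such directions as $\beta\to\alpha$) converges to a nonzero $v\in\gg$ such that $\exp(tv)$ preserves $S$ for all small $t$. The mechanism: $S$ is invariant under every $g(\alpha,\beta)=\exp(w(\alpha,\beta))$, and invariance of $S$ under $\exp(w)$ for a sequence of $w$'s whose normalized directions converge to $v$ forces, in the limit, invariance of $S$ under the flow of $v$ — because ``$\exp(w)$ preserves $S$'' can be rephrased as ``$S$ is a union of orbits of the closure of the group generated by $\exp(w)$,'' or more concretely by expanding the invariance condition and dividing by the small parameter. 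One has to be a little careful that $w(\alpha,\beta)\ne\vec 0$ for $\beta\ne\alpha$; this follows because $A_\alpha\ne A_\beta$ for $\alpha\ne\beta$ (the involutions have different fixed lines), so $g(\alpha,\beta)\ne\mathrm{id}$.

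An alternative and perhaps cleaner route, which I would pursue in parallel, is to use the group $\Gamma$ generated by all the $A_\alpha$, or rather the closure $\bar\Gamma$ of the subgroup generated by all products $A_\alpha A_\beta$ inside $G^o$. This $\bar\Gamma$ is a closed, hence Lie, subgroup of $G^o$, and it preserves $S$. If $\bar\Gamma$ is positive-dimensional we immediately get the desired one-parameter subgroup (take $v$ in its Lie algebra). If instead $\bar\Gamma$ were $0$-dimensional, i.e. finite or discrete, then the continuous map $\alpha\mapsto A_\alpha A_{\alpha_0}$ from the connected curve $S$ into the discrete set $\bar\Gamma$ would be constant, forcing $A_\alpha=A_{\alpha_0}$ for all $\alpha$, contradicting that distinct points of $S$ give distinct involutions with distinct fixed lines. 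Hence $\dim\bar\Gamma\ge 1$ and any nonzero $v$ in $\mathrm{Lie}(\bar\Gamma)$ works; $v$ is automatically represented by a divergence-free affine (non-homogeneous degree $\le 1$) vector field since $\gg$ is exactly that space, and $S$ is tangent to $v$ because $\exp(tv)\in\bar\Gamma$ preserves $S$.

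The main obstacle I anticipate is the low regularity: the family $A_\alpha$ is only assumed continuous in $\alpha$, so I cannot simply differentiate $\alpha\mapsto A_\alpha$ to produce $v$. The discrete-versus-positive-dimensional dichotomy for $\bar\Gamma$ sidesteps differentiation entirely and is the argument I would commit to; the part needing care is verifying that $\bar\Gamma$ is genuinely a (closed) Lie subgroup — this is Cartan's closed subgroup theorem applied inside the Lie group $G^o$ — and that ``$S$ invariant under a dense subset of $\bar\Gamma$'' upgrades to ``$S$ invariant under all of $\bar\Gamma$,'' which holds because $S$ is closed (as a germ one works in a fixed compact neighborhood) and the action is continuous. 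Finally I would record that $v\ne\vec 0$ and that tangency of $S$ to the vector field representing $v$ is just the infinitesimal form of invariance under $\exp(tv)$.
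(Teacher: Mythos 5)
Your first route is essentially the paper's argument: the paper sets $B_\alpha:=A_\alpha\circ A_{O_S}$, notes that $B_\alpha\to Id$ as $\alpha\to O_S$ while $B_\alpha\neq Id$ for $\alpha\neq O_S$, takes $v_\alpha:=\exp^{-1}(B_\alpha)\in\gg\setminus\{0\}$, and extracts a limit line $\ell=\rr v$ of the directions $\rr v_\alpha$. The step you leave vague (``expanding the invariance condition and dividing by the small parameter'') is precisely where the work is: one realizes $X^t=\exp(tv)$ as a limit of powers $B_{\alpha_k}^{m_{k,t}}=\exp(m_{k,t}v_{\alpha_k})$ with $m_{k,t}v_{\alpha_k}\to tv$ and, crucially, $|m\,v_{\alpha_k}|<|t|<\delta$ for \emph{every} intermediate $m\le m_{k,t}$, so that each partial power provably maps $U_\sigma(O)\cap S$ into $U_\varepsilon(O)\cap S$ and the invariance survives the passage to the limit. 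If you executed this (as in Proposition \ref{claim2} of the paper, which is stated in a reusable form and applied again later), your first route would be the paper's proof.

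The route you say you would commit to has a genuine gap. The hypersurface $S$ is only a germ, and while each generator $B_\alpha=A_\alpha\circ A_\beta$ maps a neighborhood of $O$ in $S$ into $S$, an arbitrary word in these generators need not: its successive partial images can leave the region on which $S$ is defined, after which ``preserves $S$'' is meaningless. Hence the claim that the closed subgroup $\bar\Gamma$ (closure of the group generated by the products) preserves $S$ is not established, and without it the conclusion that a one-parameter subgroup of $\bar\Gamma^{o}$ is tangent to $S$ does not follow; your remark that ``$S$ is closed in a fixed compact neighborhood'' does not repair this, because the obstruction is exactly that long products exit that neighborhood. This is not a removable technicality: the $\varepsilon$--$\delta$--$\sigma$ bookkeeping in the paper's proof of Proposition \ref{claim2} exists precisely to single out those words (powers of a single $B_{\alpha_k}$ with $|m\,v_{\alpha_k}|<\delta$) whose partial products all stay in $U_\varepsilon(O)$. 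Your dichotomy (discrete versus positive-dimensional $\bar\Gamma$) is attractive and the discrete case is correctly excluded, but the positive-dimensional case does not by itself produce a flow tangent to the germ $S$ without this locality control.
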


\begin{proof}
For every $\alpha\in S$ consider the composition 
$$B_\alpha:=A_\alpha\circ A_{O_S}: \ \ \ B_{\alpha}\to Id, \ \text{ as } \ \alpha\to O_S.$$
One has $B_{\alpha}\neq Id$ whenever $\alpha$ is close enough to 
$O_S$ and different from $O_S$: the transformation $A_{O_S}$ fixes $O_S$, while $A_{\alpha}$ doesn't. Thus, $B_{\alpha}\in G^o$, whenever 
$\alpha$ is close enough to $O_s$. On the way proving Propostion \ref{claim}, we need to  prove the following  proposition which we will use further on. 

\begin{proposition} \label{claim2} Let a family $B_\alpha:\rr^2\to\rr^2$ of planar area-preserving affine transformations depend continuously on a parameter $\alpha$ from a subset in $\rr^m$ accumulating to a point 
$\alpha_0$. Let $B_{\alpha_0}=Id$, and let $B_{\alpha}\neq Id$ for $\alpha\neq\alpha_0$. Let there exist a germ of $C^1$-smooth curve $S\subset\rr^2$ at a point $O$ such that for every $\alpha$ close enough to $\alpha_0$ 
the map $B_{\alpha}$ sends $S$ to itself. Then $S$ is a phase curve of a divergence-free linear non-homogeneous vector field. 
\end{proposition}
\begin{proof}
For every $\alpha$ close to $\alpha_0$ let $v_{\alpha}\in\gg\setminus\{0\}$ denote the preimage of the element $B_{\alpha}\in G$ under the exponential map. 
As $\alpha\to \alpha_0$, the family of one-dimensional subspaces  $\rr v_\alpha\subset\gg$  accumulates to at least one limit one-dimensional subspace: let us fix a limit subspace and denote it by 
$\ell$. Fix a vector $v$ generating $\ell$. It corresponds to a divergence free linear non-homogeneous vector field on $\rr^2$, which will be also denoted by $v$. Let us show that $S$ is its phase curve. 
Namely, we show that for every small $\var>0$ there exist $\delta,\sigma>0$ such that 
for every $t\in(-\delta,\delta)$ the time $t$ flow map $X^t$ of the field $v$ sends the intersection 
of the curve $S$ with the $\sigma$-neighborhood $U_\sigma(O)$ of the 
point $O$ to $U_{\var}(O)\cap S$. 

Indeed, fix a Euclidean quadratic form on $\gg$ in which $|v|=1$. 
Fix a small $\var>0$. Fix  $\delta,\sigma>0$ such that for every 
$w\in\gg$ with $|w|<\delta$ the image under the map $\exp(w)$ of the 
$\sigma$-neighborhood $U_{\sigma}(O)$ of the point $O$ in $\rr^{2*}$ lies in   $U_\var(O)$. 
Fix a sequence $\alpha_k\to\alpha_0$ such that the one-dimensional subspaces generated by $v_{\alpha_k}$ converge to $\ell$. For every 
$t\in(-\delta,\delta)$ fix 
a sequence of numbers $m_{k,t}\in\mathbb N$ such that 
$$w_{t,k}:=m_{k,t}v_{\alpha,k}\to tv, \ \text{ as } k\to\infty; \ \ \ 
|w_{t,k}|<|t|=|tv|.$$
It exists, since $\rr v_{\alpha,k}\to\rr v$, $|v_{\alpha,k}|\to0$ and $|v|=1$. 
Then one has 
$$Y_{t,k}:=B_{\alpha_k}^{m_{t,k}}=\exp(w_{t,k})\to X^t , \ \text{ as } k\to\infty.$$
The maps $B_{\alpha}$ preserve the curve $S$. For every $m=1,\dots,m(t,k)$ the power $B_{\alpha_k}^m$ sends $U_\sigma(O)$ to 
$U_{\var}(O)$. Therefore, it sends the intersection 
$U_{\sigma}(O)\cap S$ to $U_{\var}(O)\cap S$. Hence, so does the limit map $X^t$. This implies that the planar vector field $v$ is tangent to $S$ and finished the proof of Proposition \ref{claim2}.
\end{proof}

Proposition \ref{claim2} immediately implies the statement of Proposition \ref{claim}
\end{proof}

Thus, $S$ is a phase curve of a divergence-free linear non-homogeneous planar vector field $v$. 

\begin{proposition} \label{procon} Each phase curve of a divergence-free linear non-homogeneous planar vector field $v$ is either a (part of a) conic, or a (part of a) line.
\end{proposition}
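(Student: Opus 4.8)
The plan is to classify phase curves of a divergence-free affine vector field $v$ on $\rr^2$ by first normalising $v$ via an area-preserving affine change of coordinates, and then integrating explicitly in each normal form. Writing $v = (a_{11}x + a_{12}y + b_1)\partial_x + (a_{21}x + a_{22}y + b_2)\partial_y$, the divergence-free condition is $a_{11} + a_{22} = 0$, so the linear part $L$ is a traceless $2\times 2$ matrix. First I would dispose of the degenerate cases: if $L = 0$ then $v$ is a constant field and its phase curves are lines; if $L \neq 0$ but $v$ has no zero (equivalently the affine system is inconsistent, which for traceless $L$ can only happen when $L$ is nilpotent and $b$ is not in the image of $L$), then after an affine normalisation $v = \partial_x + x\,\partial_y$ up to scaling, whose phase curves satisfy $\frac{dy}{dx} = x$, i.e. parabolas $y = \tfrac12 x^2 + c$ — conics.

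In the remaining case $v$ has at least one zero; translating coordinates to put a zero at the origin makes $v$ linear, $v = Lx$ with $\operatorname{tr} L = 0$. Now the characteristic polynomial of $L$ is $\mu^2 + \det L$, and I would split on the sign of $\det L$. If $\det L > 0$, the eigenvalues are $\pm i\sqrt{\det L}$, and an area-preserving linear change of coordinates brings $L$ to $\begin{pmatrix} 0 & -\omega \\ \omega & 0 \end{pmatrix}$, whose phase curves are the circles $x^2 + y^2 = \text{const}$ — conics (including the degenerate point-conic at the origin). If $\det L < 0$ the eigenvalues are real and opposite, $\pm\lambda$; diagonalising (by an area-preserving map, which is possible since the product of the scaling factors can be taken $1$) gives $v = \lambda(x\,\partial_x - y\,\partial_y)$, whose trajectories satisfy $\frac{d}{dt}(xy) = 0$, hence $xy = \text{const}$ — hyperbolas, with the degenerate level $xy = 0$ being the pair of coordinate axes (two lines). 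If $\det L = 0$ with $L \neq 0$, then $L$ is nilpotent and conjugate to $\begin{pmatrix} 0 & 1 \\ 0 & 0 \end{pmatrix}$, giving $v = y\,\partial_x$; its phase curves are the horizontal lines $y = \text{const}$ — lines.

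Assembling these cases, every phase curve is either (a part of) a conic — ellipse, circle, hyperbola, parabola, a pair of lines, a single line, or a point — or (a part of) a line, which is exactly the assertion of Proposition~\ref{procon}. A cleaner and more invariant way to organise the same computation, which I would probably use in the write-up to avoid the case-by-case normalisation, is to exhibit an explicit first integral: for $v$ with linear part $L = \begin{pmatrix} a & b \\ c & -a \end{pmatrix}$ and affine part $(b_1, b_2)$, one checks directly that the quadratic function $Q(x,y) = c x^2 - 2a xy - b y^2 + (\text{linear terms})$ — chosen so that $dQ(v) = 0$ — is constant along trajectories; then each phase curve lies in a level set $\{Q = \text{const}\}$, which is a conic or, when $Q$ is affine (the case $L$ nilpotent, no equilibrium, handled above) or identically constant, a line or the whole plane. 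One then only needs the easy remark that a connected $C^1$ curve contained in the zero set of a nonzero polynomial of degree $\le 2$ is contained in (an irreducible component of) that conic, hence is part of a conic or part of a line.

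The main obstacle is purely bookkeeping: making sure that the normalising changes of coordinates can be taken area-preserving (so that they do not leave the class of divergence-free affine fields) and that the genuinely degenerate levels — point-conics, double lines, reducible conics that are unions of two lines — are correctly accounted for, since Proposition~\ref{procon} explicitly allows "part of a line" as an alternative to "part of a conic". There is no deep difficulty; the only care needed is to treat the no-equilibrium nilpotent case (the parabola case) separately from the equilibrium cases, since there translation cannot remove the affine part.
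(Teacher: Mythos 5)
Your proof is correct and follows essentially the same route as the paper's: reduce the traceless linear part to a normal form by an affine change of coordinates (zero, nilpotent with or without equilibrium, real saddle, rotation) and integrate each case explicitly, obtaining lines, parabolas, hyperbolas and circles/ellipses. The extra care about area-preserving normalisations and the sketched first-integral variant are fine but not needed, since any affine map sends conics to conics and lines to lines.
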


We  believe that this proposition is well-known, but give a proof for self-containedness.  

\medskip

\begin{proof} The matrix of the linear terms of the field $v$ has trace $0$ and therefore 
  is conjugated to one of the following four  matrices:
$$\begin{pmatrix}\lambda &0 \\ 0& -\lambda  \end{pmatrix}\ , \ \ \begin{pmatrix}0 &-\lambda \\ \lambda & 0 \end{pmatrix}\ , \ \ \begin{pmatrix}0 &1 \\ 0 & 0 \end{pmatrix}\ , \ \  \begin{pmatrix} 0 &0 \\ 0& 0  \end{pmatrix}; \ \ \ \lambda\neq0.$$

 We treat each matrix  separately, and show that in each case the phase curve is as we claimed. We work in the  standard 
 coordinates $x,y$ on the plane.
\begin{itemize}
    \item[Case 1:]  The field $v$ can be transformed to the field $\tilde v=(x,-y)$   by an affine transformation. Phase curves of the latter field are half-hyperbolas and positive and negative  coordinate axes.

\item[Case 2:]  The field $v$  can be transformed to the field $\tilde v=(y,-x)$  by an affine transformation. The only singular point is the point $(0,0)$ and all nontrivial phase curves are circles.

\item[Case 3:]   By an affine transformation, the field $v$ can be transformed to the field $\tilde v =(y, c )$ with  $c \in \mathbb{R}$. 
If $c=0$, then the latter field has line $\{ y=0\}$ of singular points, and all its non-trivial phase curves are lines. If $c\neq0$, then each phase curve is a parabola.
\item[Case 4:]  The field $v$  has  constant entries and its phase curves are straight lines. 
\end{itemize}
\end{proof}

Two-dimensional version of 
Theorem \ref{thloc} immediately follows from Propositions \ref{claim}  and  \ref{procon}. Indeed, by Proposition \ref{claim}, 
the curve $S$ is  (a part of)  a phase curve of a divergence free linear non-homogeneous planar vector field $v$. Next,  by Proposition \ref{procon}, a strictly convex phase curve of a  divergence free linear non-homogeneous planar vector field $v$ is a conic.  

\subsection{ Proof of Theorem \ref{thloc} in dimension $n\ge 3$} \label{sec:5}

We consider the tangent hyperplane $T_{O_S}S$ to $S$ at   $O_S$ and view it as a hyperplane in $\mathbb{R}^{n*}$ passing through $O_S$ . 
Next, take a  hyperplane  $ P$  which is parallel to  $T_{O_S}S$, which is  sufficiently 
close to  $T_{O_S}S$,  and   such that the intersection of $P$ with $S$ is nonempty. Because of strict convexity, the intersection $P\cap S$ is then a boundary of a  compact 
convex body  which we denote by $T'$ and view as a convex body in $P\approx \mathbb{R}^{n-1}$.  We denote by $B$ the 
barycenter of this convex body  and think without loss of generality that $B$ is the origin in our coodinate system in $\mathbb{R}^{n*}$. 

Consider the Euclidean structure in $P$  given by the Binet-Legende metric constructed  by   $T'$. Next, for every one-dimensional vector subspace $\alpha\subset\rr^{n*}$ parallel to $T_{O_S}S$,   we consider the  corresponding affine  transformation $A_\alpha$, see Remark \ref{rem:2}. As  $A_\alpha$  sends $B$ to itself, it is a linear transformation which we called  $H^*_\alpha$
in Remark \ref{rem:2}.

The  transformation $H_\alpha^*$ 
preserves $T'$, so its restriction to $P$ preserves the Binet-Legendre metric in $P$ constructed by $T'$.  The restriction of $H_\alpha^*$   to $P$ coincides  with the reflection, in the Binet-Legendre metric, with respect to the $(n-2)$-dimensional hyperplane orthogonal in the Binet-Legendre metric to $\alpha$ and containing $B$. Since $\alpha \in T_{O_S}S$ can be chosen arbitrary,   such reflections generate   the standard action of the group of rotations $O(n-1)$,
in the Binet-Legendre metric,   
 around $B$.  As the transformations $A_\alpha=H^*_\alpha$, by construction, take the point $O_s$ to itself,   the transformations $H^*_\alpha$, viewed now as transformations of the whole $\mathbb{R}^{n*}$, generate the standard action of the group $O(n-1) \subset O(n)$ on $\mathbb{R}^{n*}$. The group $O(n-1)$ is  viewed as      the 
stabiliser of $O_S$ assuming that  $B$ is 
 the  origin of our  coordinates system in $\mathbb{R}^{n*}$. The    Euclidean structure in $\mathbb{R}^{n*}$  is defined as follows: its restriction to $P$ is the Binet-Legendre metric of $T'$  and the line connecting  $B$ and $O_S$ is orthogonal to $P$.  

As all transformations $H^*_\alpha$ preserve  $S$, we have that $S$  is   rotationally  symmetric with respect to this action of $O(n-1)$.

We consider a two-dimensional plane containing $O_S$ and
$B$. Let $\Gamma$ denote its intersection with $S$. For every one-dimensional vector subspace 
$\alpha$ parallel to a line tangent to $\Gamma$ the corresponding involution $A_{\alpha}$ 
preserves $\Gamma$, since $\alpha$ lies in the above plane: see Section 2.  This together with the  proof of the ($n=2$)-dimensional case of Theorem \ref{thloc} given in   
the previous section  
yield that $\Gamma$ is a conic. It is clearly symmetric with respect to the orthogonal reflection from the line connecting  $B$ and $O_S$.  Then, the rotational hypersurface obtained by rotations of  the conic $\Gamma$ by all  elements of  the group $O(n-1)$ is a quadric and Theorem \ref{thloc} is proved. 

\section{ Proof of Theorem  \ref{thgen}}

The distribution arguments from the proof of Theorem \ref{thm:main}, see Section 2 and Remark \ref{rem:2}, 
imply that,  under  the assumptions  of Theorem \ref{thgen},  for every $\alpha$ close enough to $\alpha_0$,  the involution  $I_\alpha$ is the restriction to 
$S_1\cup S_2$ of a global affine involution $A_\alpha:\rr^{n*}\to\rr^{n*}$. This reduces Theorem \ref{thgen} to the following Proposition \ref{thgn}, which we prove in Sections \ref{sec:51} and \ref{sec:6}.

\begin{proposition}\label{thgn} Let  $S_1,S_2\subset \rr^{n*}$ be two distinct germs of hypersurfaces at  points $O_1$ and $O_2$ respectively. Let $\alpha_0\subset\rr^{n*}$ be a one-dimensional vector subspace. Let the line through $O_1$,   parallel to  $\alpha_0$,  
pass through $O_2$ and be transversal to $S_1$ and $S_2$. 
Let for every 
$\alpha$ close enough to $\alpha_0$ the involution $I_\alpha$ be the restriction to $S_1\cup S_2$ of an affine involution. Then $S_1$ and $S_2$ are parts of the same quadric (allowed to be a union of distinct hyperplanes).
\end{proposition}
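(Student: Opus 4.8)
The plan is to follow the architecture of the proof of Theorem~\ref{thloc}: first settle $n=2$, then reduce $n\ge 3$ to it by slicing. Throughout, the objects I work with are the affine involutions $A_\alpha$ already produced by the distribution argument of Remark~\ref{rem:2}: for $\alpha$ close to $\alpha_0$, $A_\alpha:\rr^{n*}\to\rr^{n*}$ is an affine involution whose linear part is a reflection in a hyperplane along the line $\alpha$, which interchanges $S_1$ and $S_2$, and which maps every line parallel to $\alpha$ to itself.

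\emph{The case $n=2$.} Put $B_\alpha:=A_\alpha\circ A_{\alpha_0}$. Its linear part has determinant $(-1)\cdot(-1)=1$, so $B_\alpha$ is area-preserving; $B_{\alpha_0}=\mathrm{Id}$; $B_\alpha\ne\mathrm{Id}$ for $\alpha$ near but distinct from $\alpha_0$, since the $(-1)$-eigenlines of $A_\alpha$ and $A_{\alpha_0}$ are the distinct lines $\alpha,\alpha_0$; $\alpha\mapsto B_\alpha$ is continuous; and $B_\alpha$ preserves each of $S_1,S_2$ because $A_{\alpha_0}(S_1)=S_2$ and $A_\alpha(S_2)=S_1$. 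Applying the proof of Proposition~\ref{claim2} to the family $B_\alpha$, and passing to a single sequence $\alpha_k\to\alpha_0$ along which $\rr v_{\alpha_k}$ converges in $\mathbb P(\gg)$, I obtain one nonzero divergence-free linear non-homogeneous field $v$ tangent to $S_1$ \emph{and} to $S_2$. By Proposition~\ref{procon} each of $S_1,S_2$ is a piece of a line or of a non-degenerate conic, and since $A_{\alpha_0}$ is an affine isomorphism carrying $S_1$ onto $S_2$ these alternatives hold for $S_1$ and for $S_2$ simultaneously. In the line case, $S_1\cup S_2$ lies in a union of two lines, a degenerate quadric, and we are done. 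In the conic case let $C_1,C_2$ be the (unique) non-degenerate conics containing $S_1,S_2$; each is the Zariski closure of a phase curve of $v$, hence $v$-invariant, and carries a polynomial first integral $\Phi$. For every $\alpha$ near $\alpha_0$ the pushforward $(A_\alpha)_*v$ is again divergence-free, linear non-homogeneous, and tangent to $S_1$ (since $A_\alpha$ swaps $S_1,S_2$); as the space of such fields tangent to a fixed non-degenerate conic is one-dimensional, $(A_\alpha)_*v=\mu_\alpha v$, and $\mu_\alpha^2=1$ because $A_\alpha$ is an involution. If $\mu_{\alpha_0}=1$ then $\mu\equiv1$ by continuity and every $A_\alpha$ preserves $v$; but in the three normal forms of Proposition~\ref{procon} the affine maps preserving such a $v$ never form a continuous one-parameter family of reflections (for the ``rotation'' field they have positive determinant; for the ``hyperbolic'' field they are diagonal and only two are reflections; for the ``parabolic'' field their linear parts are unipotent), contradicting that $\alpha\mapsto A_\alpha$ is continuous and injective. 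Hence $\mu\equiv-1$: every $A_\alpha$ reverses the flow of $v$, so preserves the pencil of $v$-invariant conics, whence $\Phi\circ A_\alpha=p_\alpha\Phi+q_\alpha$ is again a polynomial of degree $\le 2$; using $A_\alpha^2=\mathrm{id}$ together with the fact that $\Phi$ is non-constant along the fixed line of $A_\alpha$ forces $p_\alpha=1,\ q_\alpha=0$, i.e.\ $A_\alpha$ preserves every level set of $\Phi$. In particular $C_2=A_{\alpha_0}(C_1)=C_1$, and $S_1\cup S_2\subset C_1$ is the required quadric.

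\emph{The case $n\ge 3$.} For any $2$-plane $\Pi\subset\rr^{n*}$ containing the line $O_1O_2$ and any line $\alpha\subset\mathrm{dir}\,\Pi$ near $\alpha_0$, the plane $\Pi$ is $A_\alpha$-invariant: its direction splits as the sum of the $(-1)$-eigenline $\alpha$ and the line $\mathrm{dir}\,\Pi\cap P_\alpha$ contained in the $(+1)$-eigenhyperplane $P_\alpha$, and $\Pi$ meets the fixed hyperplane of $A_\alpha$; hence $A_\alpha|_\Pi$ is an affine involution of $\Pi$ extending the corresponding involution of $\Pi\cap(S_1\cup S_2)$, and the line $O_1O_2$ is still transversal, inside $\Pi$, to $\Pi\cap S_1$ and $\Pi\cap S_2$. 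Thus the case $n=2$ applies inside each such $\Pi$ and shows that $\Pi\cap S_1$ and $\Pi\cap S_2$ lie on a common conic. It then remains to deduce from ``every $2$-plane through the transversal line $O_1O_2$ cuts $S_1\cup S_2$ in a conic'' that $S_1\cup S_2$ lies on a single quadric of $\rr^{n*}$ (possibly a union of hyperplanes). Normalising $O_1O_2$ to a coordinate axis, parametrising the planes of the pencil by the directions $v$ transverse to it, and writing $S_1$ locally as a graph, each section-conic is uniquely determined by the germ of graph it contains; comparing Taylor coefficients along the axis exhibits the coefficients of the section-conics, as functions of $v$, as the restrictions to the lines $\{rv\}$ of one fixed quadratic form, i.e.\ as the sections of a single quadric $\mathcal Q$, whence $S_1\cup S_2\subset\mathcal Q$.

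I expect the decisive difficulty to be this last assembling step — a self-contained, elementary proof that a hypersurface all of whose sections by the pencil of $2$-planes through a fixed transversal line are conics is part of a quadric — together with the normal-form bookkeeping in the conic case of $n=2$. The subtlety is genuine: the weaker statement ``$B_\alpha$ preserves $S_1$ and $S_2$'' by itself only places $S_1,S_2$ on (possibly different) members of a pencil of conics, so one really must exploit that the $A_\alpha$ form an honest $(n-1)$-parameter family of reflections interchanging $S_1$ and $S_2$. This is also precisely why the strong results on ``sufficiently many conic sections'' quoted in \cite{Glu24} can be avoided here — all our section planes share a line.
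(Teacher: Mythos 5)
Your $n=2$ argument is fine in its first half (it is the paper's own reduction via $B_\alpha=A_\alpha\circ A_{\alpha_0}$, Proposition~\ref{claim2} and Proposition~\ref{procon}), and your mechanism for forcing $C_1=C_2$ — one-dimensionality of the space of divergence-free linear non-homogeneous fields tangent to an irreducible conic, $(A_\alpha)_*v=\mu_\alpha v$ with $\mu_\alpha=\pm1$, exclusion of $\mu\equiv1$ by the normal forms — is a genuinely different and, as far as I can check, workable route; the paper instead proves ``same quadric'' in all dimensions at once by a tangent-line argument (pick $p_1\in S_1$ whose tangent hyperplane is not tangent to $S_2$ and a line $L$ tangent to $S_1$ at $p_1$, transversal to $S_2$ and parallel to some $\alpha$ near $\alpha_0$; then $I_\alpha$ would send the two points of $L\cap S_2$ to the single point $p_1$, which no affine involution of $L$ can do). Minor loose ends in your version (e.g.\ ruling out that the fixed line of $A_\alpha$ lies in a level set of $\Phi$ in the hyperbolic case) are fixable.

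The genuine gap is the case $n\ge3$. You deliberately restrict to the pencil of $2$-planes containing the line $O_1O_2$ and then want to ``assemble'' the section conics into one quadric by comparing Taylor coefficients. Two things go wrong. First, the assembling statement you rely on is false for a general hypersurface: the planes through a fixed line form only an $(n-2)$-parameter pencil, and one can prescribe, with functional freedom, a conic in each plane of the pencil through $O_1$ and $O_2$ with the required tangencies; the union is a $C^1$ hypersurface all of whose pencil sections are conics but which is not a quadric. So the pencil data alone cannot suffice — you must either re-import the global involutions into the assembling step or, as the paper does, use the much larger family of \emph{all} $2$-planes parallel to some direction $\alpha$ near $\alpha_0$ (each such plane is $A_\alpha$-invariant, so the two-dimensional argument applies inside it), which is an open set of planes and feeds into Proposition~\ref{thcquad}. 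Second, ``comparing Taylor coefficients'' presupposes a second-order jet of $S_1$ at $O_1$, but the hypersurfaces are only assumed $C^1$; establishing the needed regularity (in fact analyticity) is precisely the content of the first, and hardest, half of the paper's proof of Proposition~\ref{thcquad} (the construction of analytic coordinates from five auxiliary quadric sections), and that construction again uses an open family of section planes, not a pencil. Even granting $C^2$ smoothness, your sketch would still need the osculating-quadric-plus-B\'ezout argument (a quadric with the same $2$-jet as $S_1$ at $O_1$, passing through $O_2$ tangent to $S_2$, meets each section conic with multiplicity $\ge 3+2=5>4$) rather than a coefficient comparison. As written, the $n\ge3$ case is not proved.
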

The proof is organised as follows:   In Section \ref{sec:51}  we show that 
$S_1$, $S_2$ are parts of either hyperplanes, or quadrics. Then,  in 
Section \ref{sec:6},  we show that if $S_1, S_2$ are parts of quadrics,  they are parts of the same quadric.

\subsection{Proof that $S_1$ and  $S_2$ are parts of quadrics} \label{sec:51} 

Let us first consider the two-dimensional case: $S_1$ and $S_2$ are germs of planar curves transversal to the line through $O_1$ and $O_2$ 
parallel to $\alpha_0$. For every $\alpha$ close enough to $\alpha_0$ 
the composition $B_{\alpha}=A_{\alpha}\circ A_{\alpha_0}$ is area-preserving and leaves 
each curve $S_j$ invariant and tends to the identity, as $\alpha\to\alpha_0$. This together with  Proposition \ref{claim2} 
in Section 3.1 implies that $S_1$, $S_2$ are parts of phase curves of  divergence-free linear non-homogeneous vector fields. 
Each  phase curve is either (a part of) a line, or (a part of) a  conic, by Proposition \ref{procon}. This proves that $S_1$, $S_2$ are parts 
of  conics (some of them may be degenerate, i.e., a union of two lines). 

Let us prove the similar statement in higher dimensions. The same argument, as in Section 2, implies that  for every 
one-dimensional subspace $\alpha$ close enough to $\alpha_0$ the affine involution 
$A_\alpha$ fixes each line parallel to $\alpha$. Hence, it fixes each plane 
$\Pi$ parallel to $\alpha$. Thus, the intersections $\Pi\cap S_1$, $\Pi\cap S_2$ are parts of conics, by the above argument applied to these intersections. This together with the Proposition \ref{thcquad} below   implies that $S_1$, $S_2$ are parts of quadrics.

\begin{proposition} \label{thcquad} 
Let  $S\subset\rr^n$ be a germ of $C^1$-smooth hypersurface at a point $O$. 
Let $\Pi_0\subset\rr^n$ be a given plane through $O$ transversal to $S$. 
Let for every plane $\Pi$ close enough to $\Pi_0$ the intersection $S\cap\Pi$ be a part of a conic. Then $S$ is a part of either a hyperplane, or a quadric. 
\end{proposition}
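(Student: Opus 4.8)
\textbf{Proof proposal for Proposition \ref{thcquad}.}

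The plan is to reduce the hypersurface statement to the planar-section hypothesis in a coordinate patch, and then run a pointwise algebraic argument. First I would choose affine coordinates $(x_1,\dots,x_{n-1},x_n)$ so that near $O$ the hypersurface $S$ is a graph $x_n = f(x_1,\dots,x_{n-1})$ with $f$ of class $C^1$, and so that the distinguished plane $\Pi_0$ is the $(x_1,x_n)$-plane, transversality guaranteeing that the graph description is compatible with slicing by all planes $\Pi$ close to $\Pi_0$. The key observation is that a conic in a plane is cut out by a single quadratic equation, so the hypothesis says: for each admissible $\Pi$, the restriction $f|_\Pi$ satisfies a quadratic relation among the two affine coordinates on $\Pi$. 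Equivalently, the graph of $f$ meets every such $\Pi$ in the zero set of a quadratic polynomial. Since conics form a projective space of dimension $5$, the data of the conic through five generic points of $S\cap\Pi$ varies continuously (indeed, once we know $S$ is even $C^1$ this is a harmless continuity argument) with $\Pi$, so we obtain a family of quadratic forms, one per plane $\Pi$, cutting out $S$ slice by slice.

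Next I would promote this to a global quadric. Fix five (or, safely, $\binom{n+2}{2}-1$) points $p_1,\dots,p_N$ on $S$ in general position and let $Q$ be the unique quadric hypersurface in $\rr^n$ through them — or, if that many points impose dependent conditions, argue more carefully by taking the quadric through a maximal independent subset. The claim is $S\subset Q$ near $O$. To see this, take any point $p\in S$ close to $O$. I want to exhibit a plane $\Pi$ through $p$ (and close to $\Pi_0$) that also passes through enough of the $p_i$, or more robustly: pick a plane $\Pi$ through $p$ close to $\Pi_0$; then $S\cap\Pi$ is a conic $C_\Pi$, while $Q\cap\Pi$ is also a conic (or the whole plane, degenerate case); if I have arranged that $C_\Pi$ and $Q\cap\Pi$ share five points, they coincide and hence $p\in Q$. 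The mechanism to force five common points is to vary $\Pi$ within the pencil of planes through a fixed line $\ell\subset\Pi_0$ near $O$: on the fixed line $\ell$ the conic $C_\Pi$ already agrees with $Q$ if $\ell\subset S\cap\Pi_0\subset Q$ — so the base curve $S\cap\Pi_0$, which is itself a conic lying on $Q$, supplies the needed common points as we rotate $\Pi$ about a secant line of that base conic. This is exactly the ``enough planar sections are conics'' principle, and in this note it must be done by hand rather than quoting \cite[theorem 3.7]{Glu24}.

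So the order of steps is: (1) set up the graph coordinates and record that each admissible slice is the zero set of a quadratic; (2) fix the base conic $\gamma_0 := S\cap\Pi_0$, which by the $2$-dimensional case is genuinely a conic, and let $Q$ be a quadric containing $\gamma_0$ together with enough further points of $S$ off $\Pi_0$ to pin it down; (3) for an arbitrary $p\in S$ near $O$, choose a secant line $\ell$ of $\gamma_0$ and the plane $\Pi=\mathrm{span}(\ell,p)$, observe $\Pi$ is close to $\Pi_0$, and note $S\cap\Pi$ and $Q\cap\Pi$ are conics agreeing on the two points $\ell\cap\gamma_0$ plus the tangency/incidence data carried along $\gamma_0$; (4) conclude the two conics coincide, hence $p\in Q$, hence $S\subset Q$; and finally (5) handle the degenerate alternative, where the quadratic cutting out some slice is a product of two linear forms, which forces $Q$ (and then $S$) to be a hyperplane — this is the ``or a hyperplane'' escape clause in the statement.

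The main obstacle I anticipate is step (3)–(4): making the incidence count rigorous with only $C^1$ regularity and without accidentally landing in degenerate configurations (e.g. the secant line $\ell$ tangent to $\gamma_0$, or $\Pi$ meeting $Q$ in a double line, or the five ``common points'' collapsing). One must check that for $p$ in a small enough neighbourhood of $O$ there is an honest choice of $\ell$ giving two transverse intersections with $\gamma_0$ and a plane $\Pi$ in the allowed range, and that two conics in a plane sharing five points — counted with the right multiplicities, including the first-order contact that $C^1$-ness of $S$ forces along $\gamma_0$ — must be equal. A clean way to package this is: two conics that agree as $C^1$ curves on an arc already coincide, so it suffices to show $S\cap\Pi$ and $Q\cap\Pi$ share an arc, namely a neighbourhood in $\gamma_0$ of a point of $\ell\cap\gamma_0$; then matching one more point $p$ is automatic. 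That reduces the whole proposition to the elementary fact that a conic is determined by an arc, which keeps the argument self-contained as promised.
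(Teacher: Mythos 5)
There is a genuine gap in your steps (3)--(4), and it is exactly the point where the $C^1$ hypothesis bites. If $\ell\subset\Pi_0$ is a secant of $\gamma_0=S\cap\Pi_0$ and $\Pi=\mathrm{span}(\ell,p)$ with $p\notin\Pi_0$, then $\Pi\cap\Pi_0=\ell$, so $\Pi$ meets $\gamma_0$ only in the two points of $\ell\cap\gamma_0$ --- never in an arc. Hence the conics $S\cap\Pi$ and $Q\cap\Pi$ are guaranteed to share only two points; even if you additionally impose that $Q$ be tangent to $S$ at those points (which your choice of $Q$ through $\gamma_0$ and some extra points does \emph{not} automatically give --- containing $\gamma_0$ only forces $T_qQ\supset T_q\gamma_0$, not $T_qQ=T_qS$), the intersection multiplicity is at most $2+2=4$, and two distinct conics can meet with total multiplicity $4$. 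So you cannot conclude $S\cap\Pi=Q\cap\Pi$, and your proposed ``clean packaging'' via ``a conic is determined by an arc'' has no arc to apply itself to. To reach the B\'ezout bound of $5$ one needs contact of order $3$ at one of the common points, i.e.\ a $2$-jet of $S$, which is precisely what a $C^1$ hypersurface does not provide.

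The paper closes this gap in two stages, and this is the genuinely different (and necessary) route. First it proves, by induction on $n$, that $S$ is automatically analytic: it fixes five parallel hyperplanes $V_1,\dots,V_5$ whose sections $\Gamma_j=V_j\cap S$ are quadrics by the induction hypothesis, and parametrizes $S$ by the conics $\gamma_W$ through the five points $W\cap\Gamma_j$, which depend analytically on one of them; this produces analytic coordinates on $S$. Only then, with $C^2$ regularity available, does it run the incidence argument: it constructs a quadric $C$ having the same $2$-jet as $S$ at a point $x$ and tangent to $S$ at a second point $y$ (sent to infinity by a projective change of coordinates), so that every nearby plane through $x,y$ meets $S$ and $C$ in conics with intersection index at least $3+2=5$, forcing coincidence by B\'ezout. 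If you wish to keep your outline, you must either first supply this regularity upgrade or find another way to manufacture five independent incidence conditions per plane; as written, your count stops at four and the argument does not close.
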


\begin{remark} \label{remanal} If  $S$ is $C^2$-smooth, Proposition \ref{thcquad} follows from 
a stronger result of M.\,Berger \cite[section 3]{berger}, see also  \cite[theorem 4]{art} 
and \cite[theorem 3.7]{Glu24}, where a proof of Propostion \ref{thcquad} was given in the $C^2$-smooth case. (Below we represent this proof for self-containedness.)  We prove that in fact, in the conditions of Proposition \ref{thcquad} the hypersurface $S$ is $C^\infty$-smooth and even analytic. This together with the above-mentioned already proved $C^2$-smooth case will prove Proposition \ref{thcquad} in full generality. 
\end{remark} 

\begin{proof} {\bf of Proposition  \ref{thcquad}} Without loss of generality we consider that 
$S$ is not a part of a hyperplane. We prove Proposition \ref{thcquad} by induction in $n$. 

Induction base: $n=2$. Then $S$ is obviously is a part of a conic, by assumption. 

Induction step. Let we have already proved Proposition \ref{thcquad} for $n\leq k$. Let us 
prove it for $n=k+1$. 
By the above remark, it suffices to show that  $S$ is analytic. To do this, fix a hyperplane $V_0$ through $O$ containing $\Pi_0$ and a plane $W_0$ through $O$ that is transversal to $V_0$ and close to 
 the plane $\Pi_0$ so that $W_0$ is transversal to $S$. Fix 
five distinct hyperplanes $V_1,\dots,V_5$  parallel and close enough to $V_0$ that do not pass through $O$. The intersections $\Gamma_j:=V_j\cap S$ 
are parts of quadrics (or hyperplanes) in $V_j$, by the induction hypothesis. 

Every plane $W$ parallel and close to $W_0$ is transversal to $S$, and the intersection 
$\gamma_W:=W\cap S$ is a regular curve that is a part of either a line, or a conic, by assumption. The curve 
$\gamma_W$ is transversal to each quadric $\Gamma_j$: $W$ is transversal to $V_j$, since 
$W_0$ is transversal to $V_0$. For every $j=1,\dots,5$ set 
$$P_{j,W}:=\gamma_W\cap\Gamma_j=W\cap\Gamma_j.$$
For every $j$ the plane $W$ is uniquely determined by $P_{j,W}$, and the inverse function 
$W(P_j)$ is given by the plane through $P_j$ parallel to $W_0$: $W(P_j)$ 
 depends analytically on the point $P_j=P_{j,W}\in\Gamma_j$. Therefore, $P_{j,W}$ 
depends analytically on $P_1=P_{1,W}$ as an implicit function. Now we consider  $P_j$, $j=2,\dots,5$ 
as analytic $\Gamma_j$-valued functions of $P_1$. For every $P_1\in\Gamma_1$  the curve 
$\gamma_{W(P_1)}$ also depends analytically on $P_1$, being the unique conic in the plane 
$W(P_1)$ passing through the given five points $P_1,\dots,P_5$ depending analytically on 
$P_1$. The curves $\gamma_{W(P_1)}$ corresponding to distinct $P_1$ are disjoint, since 
distinct  planes $W$ parallel to $W_0$ are disjoint. They cover a neighborhood of the base point $O$. This yields a local analytic chart on the latter neighborhood. In more detail, 
fix some linear coordinates $(x_1,x_2)$ on the plane $W_0$ centered at $O$ 
so that the $x_1$-axis be tangent to $S$ at $O$.  Let us complement $(x_1,x_2)$ to 
global affine coordinate system $(x_1,\dots,x_n)$ on $\rr^n$. 
The projection of each curve $\gamma_W$ to the $x_1$-axis induces its one-to-one parametrization by the $x_1$-coordinate. For every point $y\in S$ close to $O$ we set 
$W=W(y)$ to be the plane through $y$ parallel to $W_0$. To each $y$ we associate the pair 
$(P_1, x_1)\in\Gamma_1\times\rr$, where $P_1=P_1(y)=W\cap\Gamma_1$ and $x_1=x_1(y)$. Thus, 
 $(P_1,x_1)$ are analytic coordinates on a neighborhood of the base point $O$ on $S$, by 
 the above argument. Hence, the germ $S$ is analytic and hence, $C^2$-smooth. This together with 
 Remark \ref{remanal} implies that $S$ is a quadric. 
 
 Let us present the proof of the above implication, i.e., proof of Proposition \ref{thcquad} in the $C^2$-smooth case given in \cite[theorem 4]{art}, \cite[theorem 3.7]{Glu24}. Let $\ell$ denote the intersection of the plane $\Pi_0$ with the tangent hyperplane to $S$ at $O$. We can choose two distinct points $x,y\in S$ arbitrarily close to $O$ with the line 
 $xy$ being transversal to $S$ at $x$ and $y$ and arbitrarily close to  $\ell$, $O\notin xy$,  so that the plane 
 $\Pi_1$ through $x$, $y$, $O$ be arbitrarily close to $\Pi_0$. 
 This is possible, since $S$ is not a part of hyperplane. Then $\Pi_1$  intersects $S$ by a non-linear conic, by assumption. Below we show that there exists a quadric $C$ 
 through $x$ and $y$ that is tangent to $S$ there and has the same 2-jet at $x$, as $S$. 
 
 We claim that $S=C$. Indeed, each plane $\Pi$ through the points $x$ and $y$ that is close to $\Pi_1$ intersects $S$ and $C$ by conics 
 tangent to each other at $x$ and $y$ and having contact of order greater than two (i.e., at least 3) at $x$. Thus, the intersection index of the latter conics in $\Pi$ is no less than $3+2=5$. Hence, the conics coincide, by B\'ezout Theorem. Thus, $\Pi\cap S=\Pi\cap C$. 
 Since  $\Pi$ was arbitrary plane through $x$, $y$ close to $\Pi_1$, this implies that $O\in C$ and the germs of the hypersurfaces $S$ and $C$ 
 at $O$ coincide. Thus, $S$ is a quadric. 

 Now it remains to construct the above quadric $C$ ``osculating" with $S$ at $x$. Applying a projective transformation acting on  $\rp^n\supset\rr^n$ 
 we can and will consider that the hyperplane tangent to $S$ at $y$ is the infinity hyperplane. Let us choose affine  coordinates $(z_1,\dots,x_n)$ on its complementary affine space $\rr^n$ centered at $x$ so that the tangent hyperplane 
 to $S$ at $x$ be the coordinate $(z_1,\dots,z_{n-1})$-hyperplane, and 
 the line $xy$ be the $z_n$-axis. Then the second jet of the hypersurface $S$ at the origin $x$ is represented by the quadric  
 $C=\{ z_n=Q(z_1,\dots,z_{n-1})\}$ where $Q$ is a quadratic form. 
 The latter quadric $C$ is the one we are looking for, since it is tangent to the infinity hyperplane at $y$. 
The induction step, and therefore Proposition  \ref{thcquad} are  proved.
 \end{proof} 
 
In the setup of Proposition   \ref{thgn}, Proposition  \ref{thcquad} implies that $S_1$ and $S_2$ are parts of quadrics. In the next Section \ref{sec:6}, we show that they are parts of the same quadric. 

\subsection{Proof that $S_1$ and $S_2$ lie in the same quadric} \label{sec:6}

The remaining part of the  proof of Proposition \ref{thgn} is  to show that 
$S_1$ and $S_2$ are parts of the same quadric. Without loss of generality we consider that 
$S_1$ and $S_2$ are not both parts of hyperplanes, since a union of hyperplanes is a quadric. Then none of $S_1$, $S_2$ is a part of hyperplane, since 
the affine transformation $A_{\alpha_0}$, which permutes $S_1$ and $S_2$, cannot permute a hyperplane and a non-linear quadric. 

Suppose the contrary: 
$S_1$ and $S_2$  lie in two distinct non-linear quadrics, also denoted by $S_1$ and $S_2$.   There exists an open subset  $V\subset\rp^{n-1}$ such that for every $\alpha\in V$ there exists an affine involution $A_\alpha:\rr^{n*}\to\rr^{n*}$ sending an open subset of 
the quadric $S_1$ to an open subset of the quadric $S_2$. This implies that $A_\alpha$ permutes the quadrics $S_1$ and $S_2$ for an open subset of values $\alpha$. By analyticity, the property of extension of the map $I_\alpha$ up to a global affine transformation extends analytically, as $\alpha$ varies in complex domain. For  a generic point $p_1\in S_1$ (and in its complexification) the hyperplane $H$ tangent to $S_1$ at $p_1$ is not tangent to the complex quadric $S_2$. Indeed, otherwise the duals to the quadrics $S_1$, $S_2$ would coincide, and hence, $S_1=S_2$, -- a contradiction. Given a point $p_1$ as above, $p_1\notin S_2$, a generic line $L$ tangent to $S_1$ at $p_1$ (i.e., a generic line $L$ through $p_1$ lying in $H$) is not tangent to $S_2$. Or equivalently, it is not tangent to 
the hyperplane section $S_2\cap H$: for every point $p_1$ lying outside a given quadric in $H$ (e.g., our hyperplane section) a generic line through $p_1$ is not tangent to it.  We can choose the above $L$ being parallel to a one-dimensional subspace $\alpha$ arbitrarily close to $\alpha_0$. 
Then the involution $I_\alpha$ sends two points $p_2,p_2'$ of the intersection $L\cap S_2$ 
to $p_1$. Therefore, it cannot be restriction to $S_1\cap S_2$ of an affine involution. The contradiction thus obtained proves that 
$S_1=S_2$ and finishes the proof of Proposition \ref{thgn} and, therefore,  of Theorem   \ref{thgen}.

\printbibliography

\bigskip

{A.\,G.:
CNRS, UMR 5669 (UMPA, ENS de Lyon), France,

HSE University, Moscow, Russia,

Higher School of Modern Mathematics MIPT, Moscow, Russia,

\url{aglutsyu@ens-lyon.fr}}

\medskip

{V.\,M.:
Institut f\"ur Mathematik, 
Friedrich Schiller Universit\"at Jena,

07737 Jena, Germany, \, \url{ vladimir.matveev@uni-jena.de}}

\end{document}